\documentclass[11pt]{amsart}
\usepackage{amsmath}
\usepackage{amsthm}
\usepackage{amsfonts}
\usepackage{amssymb}
\usepackage[all]{xy}
\usepackage{alltt}
\usepackage{enumitem}
\usepackage{stmaryrd}
\usepackage{xspace}
\usepackage{comment}
\usepackage[
colorlinks=true,urlcolor=blue,linkcolor=blue,citecolor=blue
]{hyperref}

\theoremstyle{plain}
\newtheorem{prop}{Proposition}[section]

\newtheorem{thm}[prop]{Theorem}
\theoremstyle{definition}

\newtheorem{lem}[prop]{Lemma}

\newcommand{\Z}{\ensuremath{\mathbf{Z}}}

\newcommand{\N}{\ensuremath{\mathbf{N}}}
\newcommand{\C}{\ensuremath{\mathbf{C}}}

\DeclareMathOperator{\nd}{d}

\DeclareMathOperator{\im}{im}

\newcommand{\n}{\noindent}

\renewcommand{\Z}{\ensuremath{\mathbf{Z}}}
\renewcommand{\C}{\ensuremath{\mathbf{C}}}

\newcommand{\D}{\ensuremath{\mathbf{D}}}

\usepackage[margin=1.25in]{geometry}

\begin{document}  
 
\title{Half of finite abelian groups are unit groups}
\date{\today}

\author{Sunil K. Chebolu} 
\address{
Department of Mathematics, Illinois State University, Normal, Illinois 61761}
\email{schebol@ilstu.edu}

\author{Keir Lockridge} 
\address {Department of Mathematics \\
Gettysburg College \\
Gettysburg, PA 17325, USA}
\email{klockrid@gettysburg.edu}
\keywords{Fuchs problem, realizable group, unit groups,  realizable density, natural density}
\subjclass[2020]{Primary 16U60, 11N45; Secondary 11N37: }
 

\begin{abstract} A group is called realizable if it is the group of units in a ring with identity. The classification of realizable groups is a difficult open problem---originally posed by László Fuchs---and is an active area of research. Realizable groups seem rare, but their proportion within a fixed class of groups (cyclic, dihedral, finite abelian, etc.)~varies. To quantify this proportion, we introduce the realizable density of a class of finite groups as an analog of natural density for subsets of the natural numbers. The realizable finite cyclic groups and the realizable finite abelian $p$-groups for $p$ odd have been classified; we prove that their realizable densities are 1/4 and 0, respectively. The realizable finite abelian 2-groups---and more generally the realizable finite abelian groups---have not been fully classified, and these special cases appear quite difficult. Nonetheless, we prove that the realizable density of finite abelian 2-groups is 1 and the realizable density of finite abelian groups is 1/2. Our work combines existing classification theorems for realizable groups with tools from analytic number theory.
\end{abstract}
 
\maketitle

\thispagestyle{empty}


\section{Introduction}

László Fuchs asked the following fundamental question in \cite{Fuchs1960}: \emph{which abelian groups can occur as the group of units in a ring?} In this paper, all rings are assumed to have a multiplicative identity. Though Fuchs originally asked this question for abelian groups, many authors have studied it for nonabelian groups as well. We therefore say that any group $G$ is \textbf{realizable} if there exists a ring $R$ such that $G \cong R^\times$, where $R^\times$ denotes the multiplicative group of units in $R$.

The first significant results in this area were published in the 1960s and 1970s. The realizable cyclic groups (and their realizing rings) were classified by Gilmer and by Pearson and Schneider in \cite{gil, PearsonSchneider1970}. Shortly thereafter, Ditor classified the realizable groups of odd order in \cite{Ditor1971}. Since then, many authors have made progress both by studying the realizable groups within
particular classes of groups and by computing the unit groups of particular
classes of rings. See, for example, Davis and Occhipinti \cite{DO14a,D014b},  Chebolu and Lockridge \cite{CLabelian,CL17Dihedral,CLpgroups,CL17HowMany}, and Del Corso and Dvornicich  \cite{CD18a, CD18b}.  Despite these advances, Fuchs' problem remains open. Even a complete classification of realizable finite abelian 2-groups feels out of reach. 

In this paper, we address the following question: in a given class of groups, what \textit{proportion} of the groups are realizable? Realizable groups seem relatively rare, and as far as we know there has been no attempt to quantify their frequency; this paper is a first step in this direction. Our definition of proportion is modeled on the notion of natural density defined for subsets of the natural numbers $\N$. Given a subset $A \subseteq \N$ and a positive real number $x$, define $A(x)$ to be the cardinality of $A \cap [1, x]$. The {\bf natural density} of $A$ is defined by \[\nd(A) = \lim_{x \to \infty} \frac{A(x)}{x}\] (if the limit exists). The upper density $\nd^+(A)$ and the lower density $\nd^-(A)$ are defined to be the limit superior and limit inferior, respectively, of $A(x)/x$ as $x \to \infty$. These quantities always exist since this function is bounded.

Now let $\mathcal{C}$ be a fixed class of finite groups (up to isomorphism). For a positive real number $x$, let $\mathcal{C}(x)$ denote the number of groups in $\mathcal{C}$ of order at most $x$, and let $\mathcal{C}_r(x)$ denote the number of those groups that are realizable. Define the {\bf realizable density} of $\mathcal{C}$ to be the limit \[\delta(\mathcal{C})=  \lim_{x \to \infty} \frac{\mathcal{C}_r( x)}{\mathcal{C}(x)}\] (if the limit exists). Define the upper realizable density $\delta^{+}(\mathcal{C})$ to be the limit superior as $x \to \infty$ of the above function, and define the lower realizable density $\delta^{-}(\mathcal{C})$ to be the limit inferior. The quantity $\delta(\mathcal{C})$ is our notion of proportion for the realizable groups in the class $\mathcal{C}$. We will also sometimes refer to the density of one class $\mathcal{C}$ contained in another class $\mathcal{D}$; by this we just mean the limit \[ \lim_{x\to\infty} \frac{\mathcal{C}(x)}{\mathcal{D}(x)} \] (if the limit exists).

For example, let $\mathcal{D}$ denote the class of finite dihedral groups. In \cite{CL17Dihedral} we proved that the dihedral group $\D_{2n}$ of order $2n$ is the group of units in a ring of positive characteristic if and only if $n \in \{1,2,3,4,6\}$, and $\D_{2n}$ is the group of units in a ring of characteristic zero if and only if $2n$ is congruent to 4 mod 8. We may ignore the groups in the positive characteristic case: since the list is finite, it will not affect the realizable density. Thus, \[ \delta(\mathcal{D}) = \lim_{x \to \infty} \frac{\lfloor (x+4)/8 \rfloor}{\lfloor x/2 \rfloor} = \frac{1}{4}.\] It is interesting to note that this proportion is determined by the groups realizable in characteristic zero. This also turns out to be the case for the class of finite abelian groups.

Define the following families of isomorphism classes of abelian groups:
\begin{itemize}[leftmargin=2cm]
    \item[$\mathcal{Z}\colon$] finite cyclic groups
    
    \item[$\mathcal{A}_p\colon$] finite abelian $p$-groups (where $p$ is a positive prime)
    \item[$\mathcal{I}\colon$] finite abelian indecomposable groups
    \item[$\mathcal{A}\colon$] finite abelian groups.
\end{itemize}
In this paper, we prove the following theorem.

\begin{thm} For the classes defined above, we have:
  \[
      \delta(\mathcal{Z}) = 1/4, \quad\quad\quad
      \delta(\mathcal{A}_p) = \begin{cases} 0 & \text{if $p$ is odd} \\ 1 & \text{if $p = 2$},
      \end{cases}\quad\quad\quad
      \delta(\mathcal{I}) = 0, \quad\quad\quad
      \delta({\mathcal{A}}) = \frac{1}{2}.
  \]
\end{thm}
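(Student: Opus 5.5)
The approach is to use one uniform construction for lower bounds and the existing classification theorems (Gilmer, Pearson--Schneider, Ditor, and our earlier work) for upper bounds, then count with elementary analytic number theory. The key construction is the trivial extension: for any finite abelian group $M$, regard $M$ as a $\Z$-module and form $R = \Z \ltimes M$ with $M^2 = 0$. Then $R^\times = \{\pm 1\} \times (1+M) \cong \Z/2 \times M$. So \emph{every} group of the form $\Z/2 \times M$ is realizable, in characteristic zero. This accounts for the densities $1/4$, $1$ and $1/2$; the remaining work is to show that everything else is negligible.

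\emph{Cyclic groups.} If $m$ is odd, then $\Z/2 \times \Z/m \cong \Z/2m$. Hence every cyclic group of order $n \equiv 2 \pmod 4$ is realizable, and these orders have natural density $1/4$. For the upper bound I will quote the Gilmer/Pearson--Schneider classification, which applies once the finitely many small exceptions are discarded. It leaves two further families. The first is cyclic groups of odd order, which must be products of groups $\mathbf{F}_{2^k}^\times$ of coprime orders $2^k-1$. The second is cyclic groups of order divisible by $4$, whose odd part is likewise constrained, with a few extra factors. I will show that the set of $n \le x$ in either family is $o(x)$. The bound comes from counting integers built from at most boundedly many "Mersenne-type" factors: the number of products of values $2^{k_i}-1$ up to $x$ grows slower than any positive power of $x$. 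This gives $\delta(\mathcal Z)=1/4$.

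\emph{$p$-groups and indecomposables.} For $p=2$, the groups $\Z/2\times M$ with $M$ a $2$-group of order $2^{k-1}$ correspond to the partitions of $k$ having a part equal to $1$, and there are exactly $p(k-1)$ of them. Since $\mathcal A_2(x)=\sum_{k\le \log_2 x} p(k)$ and $p(k-1)/p(k)\to 1$ (Hardy--Ramanujan), the ratio tends to $1$. For $p$ odd, I will use the classification of realizable abelian $p$-groups (Ditor for odd order, and our earlier paper). A realizable abelian $p$-group must be a product of factors of the form $\mathbf F_q^\times$-components and $(1+J)$-type groups, subject to rigid constraints, and its isomorphism type is determined by $O(k)$ or subexponential-in-$\sqrt k$ data. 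The count of such groups of order $p^k$ is then $o(p(k))$, and summing gives density $0$.

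Indecomposable abelian groups are the cyclic groups of prime power order. Their number up to $x$ is about $x/\log x$. Most of them have odd prime order, and $\Z/p$ with $p$ odd is realizable only if $p$ is a Mersenne prime, so the realizable ones are $o(x/\log x)$ and $\delta(\mathcal I)=0$.

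\emph{All abelian groups.} Write $a(n)$ for the number of abelian groups of order $n$. Then $\mathcal A(x)=\sum_{n\le x}a(n)\sim c x$ with $c=\prod_{k\ge 2}\zeta(k)$ (Erd\H{o}s--Szekeres). The groups $\Z/2\times M$ are in bijection with the groups $M$ of order at most $x/2$, so they number $\mathcal A(x/2)\sim cx/2$; this gives lower density at least $1/2$. The main obstacle is the matching upper bound: I must show that realizable finite abelian groups \emph{without} a $\Z/2$ direct summand have density $0$. My plan is a case split on the ring $R$, after reducing to a finite subring generated by the units.

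\begin{itemize}
\item If $2=0$ in $R$, the unit group is built from groups $\mathbf F_{2^k}^\times$ and a $2$-group $1+J$. Its odd part is then forced to be a product of factors of orders $2^k-1$. Multiplicatively averaging $a(n)$ over $n$ whose odd part is so constrained should give $o(x)$.
\item If $2\ne 0$ and $-1\ne 1$, the element $-1$ generates a $\Z/2$. I would try to show, using the decomposition of $R$ modulo its radical (as in our earlier work on abelian groups), that this $\Z/2$ splits off, except when $-1$ is a square or the $2$-part has a restricted shape. Those exceptional shapes should again contribute density $0$, since they require the odd part of the order to be constrained in the same way.
\end{itemize}

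If this splitting argument fails in general, the fallback is to bound the non-split realizable groups directly. By the same classification theorems, their odd part lies in a set of orders of density zero, and one checks that $\sum a(n)$ over such $n$ is $o(x)$.
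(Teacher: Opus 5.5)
Your lower bounds are fine. The trivial extension $\Z\ltimes M$ is exactly the construction the paper uses, and your $p=2$ and indecomposable arguments go through. However, the upper bound $\delta^+(\mathcal{A})\le 1/2$ is not established, and the mechanism you sketch for it is false.

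Realizable groups without a $\C_2$ direct factor do \emph{not} have odd part ``constrained in the same way'' as in characteristic $2$. Take primes $q_i\equiv 1 \pmod 4$ and a finite $\Z[i]$-module $M$ of odd order (e.g.\ $\Z[i]/(a)$, of order $N(a)$). The characteristic-zero ring $(\Z[i]\ltimes M)\times\mathbf{F}_{q_1}\times\cdots\times\mathbf{F}_{q_r}$ has unit group $\C_4\times M\times\prod_i\C_{q_i-1}$, which has no $\C_2$ factor. So the exceptional orders genuinely fill out products of a sum of two squares with an element of $\im\varphi$.

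Both $L$ and $\im\varphi$ have density zero but divergent reciprocal sums, and a product of two such sets can have density $1$. The crux of the paper is the lemma that $(\im\varphi)L$ has density zero, which plays Landau's bound $x/\sqrt{\log x}$ against Erd\H{o}s's bound $x/(\log x)^{1-\epsilon}$. Nothing in your plan supplies this. Nor does it supply the structural reductions that place every order either in $(\im\varphi)B$ with $\sum_{b\in B}1/b<\infty$ (positive characteristic, by regrouping the factors $(p^f-1)p^a$ by prime), or in $B\,(\im\varphi)L\cup 3B\,(\im\varphi)L$ (characteristic zero). The characteristic-zero case uses three inputs: the Pearson--Schneider splitting $X=T\times R$; the Del Corso--Dvornicich results that a $\C_2$ or $\C_4$ factor must occur and that $|(R/N)^\times|=2^a3^b$; and the observation that $\C_4$ without $\C_2$ makes $R$ a $\Z[i]$-module, so $|N|\in L$.

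Two further steps also fail as stated. Your reduction to a ``finite subring generated by the units'' breaks in characteristic zero, since that subring contains $\Z$. And passing from a density-zero set of orders to a density-zero class of groups needs an argument because $a(n)$ is unbounded; the paper uses Cauchy--Schwarz with T\'oth's $\sum_{n\le x}a(n)^2\sim Hx$, whereas you only say ``one checks''.

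The same misconception affects the cyclic case. In the Pearson--Schneider classification the orders with $4\mid n$ are $e\cdot o$, with $o$ a coprime product of numbers $2^t-1$. But $e\in\{q^t-1\}\cup\{q^s(q-1)\}\cup R_4$ has unconstrained odd part (e.g.\ $\C_{q-1}\cong\mathbf{F}_q^\times$ for every prime $q\equiv 1\pmod 4$). Counting Mersenne-type products therefore cannot bound this family. You need each of those three sets to have density zero:
\begin{itemize}
\item prime powers, via Chebyshev or the PNT;
\item $q^s(q-1)=\varphi(q^{s+1})$, or a direct $O(\sqrt{x}\log x)$ count;
\item $R_4\subseteq L$, via Landau--Ramanujan.
\end{itemize}
These must be combined with a product lemma using $\sum_{o}1/o<\infty$, which your sub-polynomial count of Mersenne products does yield.

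For odd $p$, your ``subexponential data'' assertion is unsupported. The concrete input is that Ditor's theorem makes a realizable abelian $p$-group a product of groups $\C_{2^k-1}$, and $2^k-1=p^m$ forces $m=1$. So the group is elementary abelian, and there is at most one realizable group of each order $p^n$, which is negligible against $p(n)\to\infty$.
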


The computation of $\delta(\mathcal{Z})$ relies on the classification of realizable finite cyclic groups from \cite{PearsonSchneider1970}. For odd primes $p$, the computation of $\delta(\mathcal{A}_p)$ uses Ditor's classification of realizable groups of odd order in \cite{Ditor1971}, which implies that a realizable finite abelian $p$-group must be elementary when $p$ is odd. For the $p=2$ case,  $\C_2 \times H$ is actually realizable for \textit{any} finite abelian group $H$, and this drives the proportion to 1 even though there is no current full classification of the realizable finite abelian $2$-groups. To compute $\delta(\mathcal{I})$, we use our classification of realizable indecomposable abelian groups given in \cite{CLabelian}.

The most substantial result in this paper is that the realizable density of the class of finite abelian groups is $1/2$. Finite abelian groups that occur as the group of units in a finite ring are so rare that their proportion in $\mathcal{A}$ is zero. Thus, $\delta(\mathcal{A})$ is determined by which groups occur as the group of units in a ring of characteristic zero. This is a difficult, open case. Any such group must contain $\C_2$ or $\C_4$ as a direct factor. Every finite abelian group with $\C_2$ as a direct factor is the group of units in a characteristic zero ring, and these groups account for half of the finite abelian groups. We prove that the realizable finite abelian groups with $\C_4$ as a direct factor but not $\C_2$ have density zero in $\mathcal{A}$. This implies $\delta(\mathcal{A}) = 1/2.$

From analytic number theory, we use several major results, including: (1) the prime number theorem; (2) The Landau--Ramanujan Theorem on the natural density of integers representable as a sum of two squares; (3) the Hardy--Ramanujan asymptotic formula for the partition function $p(n)$; (4) Erdős' theorem that the natural density of the image of the Euler $\varphi$-function is zero (and his asymptotic bound on the image's counting function); and (5) asymptotic results of Erdős--Szekeres and Tóth on the number of finite
abelian groups.

\vspace{.1in}

\n \textbf{Acknowledgments:}
This paper stems from a question raised by Papa Sissokho following a master’s thesis defense on Fuchs' problem given by a graduate student working with the first author at Illinois State University (ISU). The authors thank Papa Sissokho for asking the insightful question that motivated this research. The first author also thanks ISU's Office of Research and the College of Arts and Sciences for the opportunity to participate in a Summer Writing Camp at beautiful Lake Bloomington in Hudson, IL, where parts of this paper were written.

\section{Preliminaries}

In this section, we summarize several needed results in analytic number theory.

\subsection{The number of finite abelian groups}

To compute the realizable density of finite abelian groups, we require the precise asymptotic growth rate of $\mathcal{A}(x)$ (the total number of finite abelian groups of order at most $x$). The following theorem of Erd\H{o}s and Szekeres says that $\mathcal{A}(x)$ grows linearly in $x$. Let $\zeta(z)$ denote the Riemann zeta function. We write $f(x) \sim g(x)$ and say $f$ is \textbf{asymptotic to} $g$ if $\lim_{x\to \infty} f(x)/g(x) = 1.$

\begin{thm}[\cite{ErdosSzekeres1935}]
\label{thm:erdos_szekeres}
If $\mathcal{A}$ is the class of finite abelian groups, then
\[ \mathcal{A}(x) \sim C x, \]
where 
\[ C = \prod_{k=2}^\infty \zeta(k) \approx 2.29485. \]
\end{thm}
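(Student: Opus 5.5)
The statement is the Erdős--Szekeres theorem. The plan is to prove it with a Dirichlet series and a Tauberian-type convolution argument.

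\emph{Step 1 (Dirichlet series).} Let $a(n)$ be the number of abelian groups of order $n$. Because a finite abelian group is the direct product of its Sylow subgroups, $a$ is multiplicative, and $a(p^k)=p(k)$, the partition function. Euler's generating function $\sum_k p(k)t^k=\prod_{j\ge1}(1-t^j)^{-1}$, applied at $t=p^{-s}$ and multiplied over primes, gives
\[ \sum_{n\ge1} \frac{a(n)}{n^s} = \prod_{j\ge 1}\zeta(js) \quad (\operatorname{Re} s>1). \]

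\emph{Step 2 (convolution).} Write $\prod_{j\ge1}\zeta(js)=\zeta(s)G(s)$ with $G(s)=\prod_{j\ge2}\zeta(js)=\sum_n g(n)n^{-s}$. Here $g\ge0$, since $g(n)$ counts abelian groups of order $n$ having no cyclic factor of prime order in a suitable decomposition; it is enough that $g$ comes from the nonnegative coefficients of the product. Each factor $\zeta(js)$ with $j\ge2$ converges absolutely for $\operatorname{Re}s>1/2$. The product over $j$ also converges absolutely there, because $\zeta(js)-1=O(2^{-j\sigma})$ uniformly for $\sigma\ge\sigma_0>1/2$. Hence $\sum_n g(n)n^{-\sigma}<\infty$ for every $\sigma>1/2$. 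Since $a=1*g$, we get
\[ \mathcal{A}(x)=\sum_{n\le x}a(n)=\sum_{m\le x} g(m)\lfloor x/m\rfloor = x\sum_{m\le x}\frac{g(m)}{m} - \sum_{m\le x} g(m)\{x/m\}. \]

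\emph{Step 3 (estimates).} The first term is $x\,(G(1)-o(1))$, and $G(1)=\prod_{k\ge2}\zeta(k)=C$. For the error term, choose $\sigma=3/4$. Then $\sum_{m\le x}g(m)\le x^{3/4}\sum_m g(m)m^{-3/4}=O(x^{3/4})$, which is $o(x)$. Therefore $\mathcal{A}(x)\sim Cx$.

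The main point needing care is the absolute convergence of $G$ slightly to the right of $\operatorname{Re}s=1/2$, i.e.\ that the infinite product over $j$ of Dirichlet series yields a Dirichlet series with nonnegative coefficients and finite sum at $\sigma=3/4$. I would justify this by expanding the product at real $\sigma$: all terms are nonnegative, so rearrangement is legitimate, and the value equals $\prod_{j\ge2}\zeta(j\sigma)<\infty$.
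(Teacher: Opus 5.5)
Your proof is correct. The paper gives no argument for this statement: it quotes it from Erd\H{o}s and Szekeres and uses only the asymptotic $\mathcal{A}(x)\sim Cx$, in Lemmas \ref{reductiontonaturaldensity} and \ref{lem:C_2factor}. So your proposal is a self-contained replacement for a citation, not an alternative to an argument in the text.

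Your route is the standard convolution argument. You write $a=1*g$, where $g$ is multiplicative and $g(p^k)$ is the number of partitions of $k$ with no part equal to $1$. Equivalently, $g(m)$ is the number of ways to write $m=n_2^2n_3^3\cdots$, which makes precise your phrase ``in a suitable decomposition.'' This gives $\mathcal{A}(x)=\sum_{m\le x}g(m)\lfloor x/m\rfloor$.

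You handle the delicate points correctly:
\begin{itemize}
\item Positivity justifies the Euler-product rearrangement.
\item Positivity also gives $\sum_m g(m)m^{-\sigma}=\prod_{j\ge 2}\zeta(j\sigma)<\infty$ for real $\sigma>1/2$.
\item The Rankin-type bound $\sum_{m\le x}g(m)\le x^{\sigma}G(\sigma)$ controls the fractional-part error.
\end{itemize}

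You can make the $o(1)$ in your first term quantitative via $\sum_{m>x}g(m)/m\le x^{\sigma-1}G(\sigma)$. Taking $\sigma=1/2+\epsilon$ then gives $\mathcal{A}(x)=Cx+O_\epsilon(x^{1/2+\epsilon})$. This is slightly weaker than the $O(\sqrt{x})$ error term Erd\H{o}s and Szekeres obtained, but far more than the paper needs.
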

\n In particular, the average number of abelian groups of order $n$, for $n\le x$, tends to approximately $2.29485$.

We need the next theorem to prove that if $S\subseteq \N$ is a set with natural density 0, then any class of finite abelian groups with orders in $S$ has density zero in $\mathcal{A}$ (see Lemma \ref{reductiontonaturaldensity} below).

\begin{thm}[\cite{toth}] \label{wk} Let $a(n)$ denote the number of isomorphism classes of abelian groups of order $n$. Then,
\[ B(x) = \sum_{n \le x} a(n)^2 \sim H x  \]
for \[H = \prod_q \left(1 - \frac{1}{q}\right) \left( \sum_{k=0}^\infty \frac{p(k)^2}{q^k} \right),\] where the product is over all positive primes $q$ and $p(k)$ is the partition function.
\end{thm}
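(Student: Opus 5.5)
The plan is to exploit multiplicativity. By the structure theorem for finite abelian groups, $a$ is multiplicative with $a(q^k)=p(k)$ for every prime $q$ and $k\ge 0$. Hence $a(n)^2$ is multiplicative too, and for $\mathrm{Re}\, s>1$ its Dirichlet series has the Euler product
\[ \sum_{n\ge1}\frac{a(n)^2}{n^s}=\prod_q \sum_{k=0}^\infty \frac{p(k)^2}{q^{ks}}. \]
I will factor out $\zeta(s)$. Concretely, define a multiplicative $g$ by $a^2 = 1 * g$ (Dirichlet convolution). Locally this means $g(q^k)=p(k)^2-p(k-1)^2$ for $k\ge1$. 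In particular $g(q)=p(1)^2-p(0)^2=0$, and $g(q^k)\ge 0$ because $p$ is nondecreasing. Then
\[ \sum_n \frac{g(n)}{n^s}=\prod_q\Bigl(1-\frac1{q^s}\Bigr)\sum_{k\ge0}\frac{p(k)^2}{q^{ks}}, \]
and at $s=1$ this is exactly the constant $H$ in the statement.

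The key step is to show that $\sum_n g(n)n^{-\sigma}$ converges for some fixed $\sigma\in(1/2,1)$, say $\sigma=3/4$. Since $g\ge0$ and $g(q)=0$, it suffices to show that $\sum_q\sum_{k\ge2} g(q^k)q^{-k\sigma}<\infty$. For this I use the crude bound $g(q^k)\le p(k)^2\le e^{2c\sqrt k}$; even an elementary bound $p(k)\le e^{c\sqrt k}$ suffices, and the Hardy--Ramanujan formula certainly gives it. Then for every prime $q$,
\[ \sum_{k\ge2}\frac{p(k)^2}{q^{k\sigma}}\le q^{-2\sigma}\sum_{k\ge2}\frac{p(k)^2}{2^{(k-2)\sigma}}=K\,q^{-2\sigma}, \]
where $K<\infty$ is independent of $q$, since $e^{2c\sqrt k}$ is subexponential. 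Because $2\sigma>1$, the sum $\sum_q q^{-2\sigma}$ converges, so the Euler product for $\sum g(n)n^{-\sigma}$ converges absolutely. This uniform-in-$q$ control of the local factors is the step I expect to need the most care; everything else is routine.

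Finally, write
\[ B(x)=\sum_{n\le x}\sum_{d\mid n}g(d)=\sum_{d\le x}g(d)\Bigl\lfloor \frac xd\Bigr\rfloor = x\sum_{d\le x}\frac{g(d)}{d}+O\Bigl(\sum_{d\le x}g(d)\Bigr). \]
The tail $x\sum_{d>x}g(d)/d$ is at most $x\cdot x^{\sigma-1}\sum_d g(d)d^{-\sigma}=O(x^\sigma)$. By Rankin's trick, $\sum_{d\le x}g(d)\le x^\sigma\sum_d g(d)d^{-\sigma}=O(x^\sigma)$. Hence $B(x)=Hx+O(x^{3/4})$, which gives $B(x)\sim Hx$. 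Convergence of $\sum_d g(d)d^{-\sigma}$ also guarantees that $H=\sum_d g(d)/d$ is finite and equals the stated Euler product, and $H>0$ since $H\ge g(1)=1$.
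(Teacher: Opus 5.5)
Your argument is correct. The paper gives no proof of this statement at all. It quotes the result from Tóth and uses it as a black box, so your write-up is a genuinely different route: a self-contained proof by the standard convolution method.

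Write $a^2 = 1 * g$, with $g(q)=0$ and $0\le g(q^k)\le p(k)^2$. The only arithmetic input you then need is a subexponential bound on $p(k)$. It gives a local bound $K q^{-2\sigma}$ uniform in $q$, so $\sum_d g(d)d^{-\sigma}$ converges for every $\sigma>1/2$, not just $\sigma=3/4$. The identity $B(x)=\sum_{d\le x} g(d)\lfloor x/d\rfloor$, together with Rankin's trick for both the tail and the floor error, then yields $B(x)=Hx+O_\sigma(x^{\sigma})$. The local computation $\sum_k g(q^k)q^{-k}=(1-1/q)\sum_k p(k)^2q^{-k}$ identifies $\sum_d g(d)/d$ with the stated Euler product.

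What the citation buys the paper is brevity. What your argument buys is independence from the reference, an explicit error term $O_\epsilon(x^{1/2+\epsilon})$, and a transparent explanation of where the constant $H$ comes from.

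Your argument also shows the theorem is stronger than what the paper actually needs. The paper only uses it in Lemma \ref{reductiontonaturaldensity}, where boundedness of $B(x)/x$ is all that matters. Your setup gives that in one line: since $g\ge 0$, $B(x)=\sum_{d\le x} g(d)\lfloor x/d\rfloor\le x\sum_{d} g(d)/d = Hx$. This uses nothing beyond the convergence of $\sum_d g(d)/d$.
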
 

\begin{lem} \label{reductiontonaturaldensity}
 Let $S$ be a set of natural numbers and let $\mathcal{S}$ be any collection of finite abelian groups whose orders lie in $S$. If $S$ has natural density zero, then the density of $\mathcal{S}$ in $\mathcal{A}$ is zero.
\end{lem}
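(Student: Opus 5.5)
We bound the number of groups in $\mathcal{S}$ of order at most $x$ by counting all abelian groups whose order lies in $S \cap [1,x]$, and then control this count with the Cauchy--Schwarz inequality and Theorem \ref{wk}. Writing $a(n)$ for the number of abelian groups of order $n$, we have
\[ \mathcal{S}(x) \le \sum_{n \le x,\ n \in S} a(n) \le \Bigl( \sum_{n\le x,\ n\in S} 1 \Bigr)^{1/2} \Bigl( \sum_{n \le x} a(n)^2 \Bigr)^{1/2} = S(x)^{1/2} B(x)^{1/2}. \]

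Dividing by $\mathcal{A}(x)$ and using Theorem \ref{thm:erdos_szekeres} gives
\[ \frac{\mathcal{S}(x)}{\mathcal{A}(x)} \le \frac{\sqrt{S(x)\,B(x)}}{\mathcal{A}(x)} = \sqrt{\frac{S(x)}{x}} \cdot \frac{\sqrt{B(x)/x}}{\mathcal{A}(x)/x}. \]
By Theorem \ref{wk}, $B(x)/x \to H$. By Theorem \ref{thm:erdos_szekeres}, $\mathcal{A}(x)/x \to C > 0$. So the second factor converges to $\sqrt{H}/C$. Since $\nd(S) = 0$, the first factor tends to $0$. The ratio is nonnegative, so it tends to $0$, which proves the lemma.

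The only point requiring care is that $H$ is finite, so that $B(x) = O(x)$; this is part of the content of Theorem \ref{wk}. The reason that theorem is needed at all is that $a(n)$ is unbounded, so we cannot simply bound $\mathcal{S}(x)$ by a constant times $S(x)$. The second-moment bound is exactly what lets us trade the sparseness of $S$ against the irregular size of $a(n)$.
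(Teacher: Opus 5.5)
Your proposal is correct and follows the paper's proof: you bound $\mathcal{S}(x)$ by $\sum_{n\le x,\, n\in S} a(n)$, apply Cauchy--Schwarz against $B(x)=\sum_{n\le x} a(n)^2$, and conclude using Theorems \ref{thm:erdos_szekeres} and \ref{wk}. Writing the ratio as a product of factors whose limits are known is, if anything, slightly tidier than the paper's version, which mixes $\sim$ with $\le$.
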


\begin{proof} Let $a(n)$ denote the number of finite abelian groups of order $n$ and let $B(x) = \sum_{n \le x} a(n)^2$.  By Theorems \ref{thm:erdos_szekeres} and \ref{wk}, there are constants $C$ and $H$ such that $\mathcal{A}(x) \sim Cx$ and $B(x) \sim Hx$. To compute the density of $\mathcal{S}$ in $\mathcal{A}$, consider 
\[ \frac{\mathcal{S}(x)}{\mathcal{A}(x)} \sim \frac{1}{Cx} \mathcal{S}(x) \leq \frac{1}{Cx} \sum_{\substack{n  \le x \\ n \in S} } a(n).\] The inequality follows from the fact that there are at most $a(n)$ elements in $\mathcal{S}$ of order $n$.  We will prove that the limit as $x \to\infty$ of the last expression above is zero. By the Cauchy-Schwarz inequality,

\[ \sum_{\substack{n  \le x \\ n \in S} }a(n)  \le  \left( \sum_{\substack{n  \le x \\ n \in S}} a(n)^2 \right)^{1/2}  \left( \sum_{\substack{n  \le x \\ n \in S} }1^2 \right)^{1/2} \le \left( \sum_{n  \le x} a(n)^2 \right)^{1/2} ( S(x))^{1/2}. \] Dividing by $x$ on both sides and taking limits gives:
\[  \lim_{x \rightarrow \infty} \frac{1}{x}\sum_{\substack{n \in S \\ n  \le x} }a(n)  \le  \lim_{x \rightarrow \infty} \left( \frac{ \sum_{n  \le x} a(n)^2 }{x} \right)^{1/2}  \left( \frac{S(x)}{x} \right)^{1/2}  = (H)^{1/2} (0) = 0.\]
This completes the proof.
\end{proof}

\subsection{The distribution of primes}

To analyze the frequency of certain `building blocks' for the orders of realizable groups, we will use the prime number theorem. 

\begin{thm}[Prime Number Theorem]
Let $\pi(x)$ denote the number of primes $p \le x$.  Then,
\[ \pi(x) \sim  \frac{x}{\log x}. \]
In particular, the natural density of primes is zero. 
\end{thm}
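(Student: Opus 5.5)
The statement has two parts. The asymptotic $\pi(x) \sim x/\log x$ is the classical Prime Number Theorem, and the paper will almost certainly cite it rather than reprove it. The second part, that the primes have natural density zero, follows immediately from the first. I would therefore present the density claim as a corollary and only sketch the standard analytic route to the asymptotic itself.

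\emph{Density zero from the asymptotic.} Let $P$ be the set of primes, so that $P(x) = \pi(x)$. Then
\[ \frac{P(x)}{x} = \frac{\pi(x)}{x/\log x}\cdot \frac{1}{\log x}. \]
The first factor tends to $1$ by the asymptotic, and the second tends to $0$, so $\nd(P) = 0$. This part does not even need the full strength of the theorem. Chebyshev's elementary bound $\pi(x) = O(x/\log x)$ suffices, and it can be proved by noting that $\binom{2n}{n} \le 4^n$ is divisible by every prime in $(n, 2n]$, which gives $\pi(2n) - \pi(n) \le n\log 4/\log n$, and then summing over dyadic ranges. Even more cheaply, a sieve argument shows that for each fixed $k$ the density of integers with no prime factor among the first $k$ primes is $\prod_{i \le k}(1 - 1/p_i)$. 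This tends to $0$ because $\sum 1/p$ diverges, which already forces $\nd^+(P) = 0$.

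\emph{The asymptotic itself.} I would follow Newman's short proof.
\begin{enumerate}[label=(\roman*)]
\item Introduce $\vartheta(x) = \sum_{p \le x}\log p$, and reduce the claim to $\vartheta(x) \sim x$ by partial summation.
\item Show that $\zeta(s)$ extends meromorphically to $\mathrm{Re}(s) > 0$ with only a simple pole at $s=1$. Then show that $\zeta(1+it) \neq 0$ for $t \neq 0$, using the inequality $\zeta(\sigma)^3\,|\zeta(\sigma+it)|^4\,|\zeta(\sigma+2it)| \ge 1$, which comes from $3 + 4\cos\theta + \cos 2\theta \ge 0$.
\item Deduce that $\Phi(s) = \sum_p \log p / p^s$ satisfies $\Phi(s) - 1/(s-1)$ extends holomorphically to $\mathrm{Re}(s) \ge 1$.
\item Apply Newman's Tauberian theorem to $f(t) = \vartheta(e^t)e^{-t} - 1$. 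This gives convergence of $\int_1^\infty (\vartheta(x) - x)x^{-2}\,dx$.
\item Combine that convergence with the monotonicity of $\vartheta$ to conclude $\vartheta(x) \sim x$.
\end{enumerate}

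\emph{Main obstacle.} The substantive steps are the nonvanishing of $\zeta$ on the line $\mathrm{Re}(s) = 1$ and the Tauberian contour-integral estimate. Everything else is bookkeeping. Since only the density-zero consequence is used later in the paper, I would cite the classical theorem for the asymptotic and include just the one-line deduction above.
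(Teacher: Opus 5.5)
Your proposal is correct and takes the same approach as the paper, which also cites the classical Prime Number Theorem without proof and gets density zero immediately from $\frac{\pi(x)}{x} = \frac{\pi(x)}{x/\log x}\cdot\frac{1}{\log x} \to 0$, noting that Chebyshev's bounds would suffice. One small correction to your closing remark: the paper uses more than density zero, because its count of indecomposable groups needs $2\log_2(x+1)/\pi(x) \to 0$, which requires a Chebyshev-type \emph{lower} bound on $\pi(x)$ that your sieve argument alone would not supply.
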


The prime number theorem can be used to compute the natural densities of many other sets built using primes. For instance, it implies that the natural density of prime powers is also zero. This helps us to compute the realizable densities of the finite cyclic groups and of the indecomposable finite abelian groups. Though we invoke the prime number theorem as needed for convenience, it is a much stronger theorem than what we actually require; for our purposes, the bounds on $\pi(x)$ derived by Chebyshev in 1850---that $\pi(x)$ is bounded both above and below by a constant times $x/\log x$ for $x \gg 0$---suffice.

\subsection{The Landau--Ramanujan Theorem}

Sometimes the `building blocks' for orders of realizable groups involve sums of two squares; for this reason, the Landau--Ramanujan Theorem is useful. By a well-known result of Fermat and Euler, a natural number $n$ can be represented as the sum of two squares if and only if every prime factor $p \equiv 3 \pmod 4$ occurring in the prime factorization of $n$ has an even exponent. In 1908, Landau established the precise asymptotic growth for the counting function of such integers (Ramanujan discovered a nearly identical estimate in 1913).

\begin{thm}[\cite{landau1908}]
\label{thm:landau}
Let $L$ denote the set of natural numbers representable as a sum of two squares. Then, 
\[ L(x) \sim K \frac{x}{\sqrt{\log x}} \]
where $K \approx 0.7642$ is the Landau-Ramanujan constant. In particular, the natural density of $L$ is 0.
\end{thm}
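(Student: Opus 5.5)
We will only ever use the final assertion, that $L$ has natural density $0$, so I would prove that part directly and elementarily. I would then indicate the standard analytic route to the full asymptotic $L(x)\sim Kx/\sqrt{\log x}$.

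\emph{Density zero.} By the Fermat--Euler criterion quoted above, if $n\in L$ then for every prime $q\equiv 3 \pmod 4$ either $q\nmid n$ or $q^2\mid n$. Fix $y>0$ and let $P_y$ be the product of the squares $q^2$ of the primes $q\le y$ with $q\equiv 3\pmod 4$. Whether $n$ satisfies the condition at all such $q$ depends only on the residue of $n$ modulo $P_y$. By the Chinese remainder theorem, the proportion of admissible residues is
\[ \prod_{\substack{q\le y\\ q\equiv 3 \ (4)}}\Bigl(1-\frac1q+\frac1{q^2}\Bigr). \]
Counting complete blocks of length $P_y$ in $[1,x]$ then gives
\[ \nd^+(L)\le \prod_{\substack{q\le y\\ q\equiv 3\ (4)}}\Bigl(1-\frac1q+\frac1{q^2}\Bigr)\le \exp\Bigl(-\sum_{\substack{q\le y\\ q\equiv 3\ (4)}}\frac1q+\sum_q\frac1{q^2}\Bigr). \]
Letting $y\to\infty$ shows $\nd(L)=0$, provided $\sum_{q\equiv 3\ (4)}1/q$ diverges.

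\emph{Main obstacle.} The crux is precisely this divergence. It is a special case of Dirichlet's theorem on primes in progressions, and I would get it from the Dirichlet series directly. With $\chi$ the nontrivial character mod $4$, we have $\sum_q (1-\chi(q))/q^s = \log\zeta(s)-\log L(s,\chi)+O(1)$ as $s\to1^+$. Since $L(1,\chi)=\pi/4\neq0$ (Leibniz series) while $\zeta(s)\to\infty$, the left side, which is twice the sum over $q\equiv3\pmod 4$ up to bounded error, tends to infinity.

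\emph{Full asymptotic (sketch).} Let $F(s)=\sum_{n\in L}n^{-s}$. The Euler product is
\[ F(s)=(1-2^{-s})^{-1}\prod_{p\equiv1\ (4)}(1-p^{-s})^{-1}\prod_{q\equiv3\ (4)}(1-q^{-2s})^{-1}. \]
Comparing Euler factors yields $F(s)^2=\zeta(s)L(s,\chi)G(s)$, with $G$ holomorphic and nonzero on $\mathrm{Re}\, s>1/2$. Hence $F(s)= c\,(s-1)^{-1/2}(1+o(1))$ near $s=1$, and $F$ extends suitably to the left of $\mathrm{Re}\, s=1$ using the zero-free region for $\zeta(s)L(s,\chi)$. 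A Tauberian theorem of Selberg--Delange type (or Wirsing's theorem for the multiplicative indicator of $L$) then gives
\[ L(x)\sim \frac{c}{\Gamma(1/2)}\frac{x}{\sqrt{\log x}}, \]
which identifies $K=c/\sqrt{\pi}$. This step is only needed for the constant and is not used later in the paper.
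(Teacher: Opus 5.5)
The paper gives no proof of this theorem; it simply cites Landau. Your density-zero argument is correct and complete: the CRT count, the bound $1-1/q+1/q^2\le e^{-1/q+1/q^2}$, and the divergence of $\sum_{q\equiv 3\,(4)}1/q$ via $\log\zeta(s)-\log L(s,\chi)$ with $L(1,\chi)=\pi/4$ all check out. Your analytic sketch is the standard Landau/Selberg--Delange route. The factorization $F(s)^2=\zeta(s)L(s,\chi)G(s)$ with $G(s)=(1-2^{-s})^{-1}\prod_{q\equiv 3\,(4)}(1-q^{-2s})^{-1}$ is right, and $c^2=L(1,\chi)G(1)$ gives $K^2=c^2/\pi=\tfrac12\prod_{q\equiv 3\,(4)}(1-q^{-2})^{-1}$, which matches $K\approx 0.7642$.

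\textbf{Which part is load-bearing.} You are wrong that only density zero is used later, and that the asymptotic is needed ``only for the constant.'' The proof of Lemma \ref{imphiL} uses this theorem in the quantitative form $L(x)\le Cx/\sqrt{\log x}$, and density zero cannot replace it there:
\begin{itemize}
\item $\im\varphi$ contains every $p-1$, so its reciprocal sum diverges and Lemma \ref{lem:product_density} is unavailable.
\item The example before that lemma shows that two density-zero sets can have a product of density one.
\end{itemize}
Your block sieve does not rescue this. Its error term forces $P_y\le x$, i.e.\ $y\ll\log x$, so at best it gives $L(x)\ll x/\sqrt{\log\log x}$. Inserting that rate into the bound $S(x)\le\sum_{v\in\im\varphi,\,v\le x/2}L(x/v)$ of Lemma \ref{imphiL} yields something of order at least $x\sqrt{\log\log x}$. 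This is because $\log\log(x/v)\le\log\log x$ and $\sum_{v\in\im\varphi,\,v\le x/2}1/v\ge\sum_{p\le x/2}1/(p-1)\gg\log\log x$. So the part the paper actually relies on is exactly the part you leave as a sketch.

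\textbf{How to close it.} You can prove the needed upper bound with tools already in your proposal.
\begin{enumerate}
\item For multiplicative $0\le f\le 1$, write $\log n=\sum_{p^k\| n}\log p^k$ and use $f(p^km)\le f(m)$ for $p\nmid m$ together with Chebyshev's bound on prime powers. This gives $\sum_{n\le x}f(n)\log n\ll x\sum_{m\le x}f(m)/m$, hence $\sum_{n\le x}f(n)\ll\frac{x}{\log x}\prod_{p\le x}\bigl(1+f(p)/p+f(p^2)/p^2+\cdots\bigr)$.
\item For $f=\mathbf{1}_L$ this product is $\ll\exp\bigl(\sum_{p\le x,\,p\equiv 1\,(4)}1/p\bigr)$.
\item Evaluate your own identity at $s=1+1/\log x$; the tail $q>x$ contributes $O(1)$. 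This gives $\sum_{q\le x,\,q\equiv 3\,(4)}1/q\ge\tfrac12\log\log x-O(1)$.
\item By Mertens, $\sum_{p\le x,\,p\equiv 1\,(4)}1/p\le\tfrac12\log\log x+O(1)$, and therefore $L(x)\ll x/\sqrt{\log x}$.
\end{enumerate}
This is precisely the form in which the paper uses the theorem.
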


\subsection{The Euler totient function}

Some of the building blocks used in the classification of orders of realizable cyclic groups have the form $q-1$ and $q^s(q-1)$. These are explicit values of the Euler totient function $\varphi(n)$. Erd\H{o}s proved that the natural density of the image of $\varphi$ is zero; this follows from the first statement in the next theorem. An excellent discussion of the distribution of the elements in $\im \varphi$ may be found in \cite{ford}.

\begin{thm}[\cite{erdos35}] \label{thm:erdos_totient}
 Given $0 < \epsilon < 1$, there is a constant $C_\epsilon$ such that \[(\im \varphi)(x) \leq C_\epsilon \frac{x}{(\log x)^{1-\epsilon}}\] for $x \gg 0.$ Consequently, for any $0 < \epsilon < 1,$ there is a constant $D_\epsilon$ such that \[ \sum_{\substack{v \in \im \varphi \\ v \leq x}} \frac{1}{v} \leq D_\epsilon (\log x)^\epsilon\] for $x \gg 0.$
\end{thm}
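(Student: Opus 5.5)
The first bound is Erdős's theorem \cite{erdos35}, and we take it as given. The plan is to derive the second bound from the first by Abel (partial) summation. The idea is that a counting function of size $x/(\log x)^{1-\epsilon}$ produces a reciprocal sum of size about $\int dt/(t(\log t)^{1-\epsilon})$, which is of order $(\log x)^{\epsilon}$.

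Write $V(t) = (\im\varphi)(t)$ for the counting function of the image of $\varphi$. We use two facts about it. Trivially $V(t) \le t$ for all $t \ge 1$. By the first statement, there is a threshold $t_0 \ge 3$ such that $V(t) \le C_\epsilon\, t/(\log t)^{1-\epsilon}$ for all $t \ge t_0$.

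Step 1 (partial summation). Since $V$ is a step function with jumps at the elements of $\im\varphi$, Riemann--Stieltjes summation gives, for $x \ge 1$,
\[ \sum_{\substack{v \in \im\varphi \\ v \le x}} \frac{1}{v} = \int_{1^-}^{x} \frac{dV(t)}{t} = \frac{V(x)}{x} + \int_1^x \frac{V(t)}{t^2}\,dt. \]

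Step 2 (boundary term). For $x \ge t_0$ we have $V(x)/x \le C_\epsilon(\log x)^{\epsilon-1}$, which is at most $C_\epsilon$ (and in fact tends to $0$).

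Step 3 (the integral). Split it at $t_0$. On $[1,t_0]$ the trivial bound gives
\[ \int_1^{t_0} \frac{V(t)}{t^2}\,dt \le \int_1^{t_0} \frac{dt}{t} = \log t_0. \]
On $[t_0,x]$ we use Erdős's bound and the substitution $u = \log t$:
\[ \int_{t_0}^x \frac{C_\epsilon\,dt}{t(\log t)^{1-\epsilon}} = C_\epsilon \int_{\log t_0}^{\log x} u^{\epsilon-1}\,du \le \frac{C_\epsilon}{\epsilon}(\log x)^{\epsilon}. \]

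Step 4 (absorbing constants). Combining the steps, for $x \ge t_0$ the sum is at most
\[ C_\epsilon + \log t_0 + \frac{C_\epsilon}{\epsilon}(\log x)^\epsilon. \]
The first two terms are constants, and $(\log x)^\epsilon \to \infty$ because $\epsilon > 0$. So for $x \gg 0$ they are bounded by $(\log x)^\epsilon$, and we may take
\[ D_\epsilon = \frac{C_\epsilon}{\epsilon} + 1. \]

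The only delicate point is this last bookkeeping. The constant $C_\epsilon$ and the threshold $t_0$ both depend on $\epsilon$, and the bounded contributions from small $t$ must be absorbed into $D_\epsilon(\log x)^\epsilon$. This works only because $\epsilon > 0$; at $\epsilon = 0$ the integral would grow like $\log\log x$ and no such absorption would be available. Everything else is routine.
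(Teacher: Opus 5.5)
Your proposal is correct and follows the paper's argument. You take Erdős's bound as given, apply Abel summation, bound the boundary term by $C_\epsilon(\log x)^{\epsilon-1}$, and evaluate the integral with the substitution $u=\log t$. Your split at the threshold $t_0$, using the trivial bound $V(t)\le t$ below it, is slightly more careful than the paper: the paper discards only $[1,2]$ and applies the Erdős bound on all of $[2,x]$, even though that bound is only asserted for large $t$.
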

\begin{proof}
    The first part is what appears in \cite{erdos35}; for the second part, apply Abel's summation formula to obtain \[ \sum_{\substack{v \in \im \varphi \\ v \leq x}} \frac{1}{v} = \frac{(\im \varphi)(x)}{x} + \int_1^x \frac{(\im \varphi)(t)}{t^2} \, dt.\] For $x \gg 0$ and some constant $C_\epsilon,$ the first term is bounded by \[ \frac{C_\epsilon}{(\log x)^{1-\epsilon}} = C_\epsilon (\log x)^{\epsilon -1} \leq C_\epsilon(\log x)^\epsilon.\] For the second term, we may ignore the part of the integral over $[1,2].$ Using a simple $u$-substitution ($u = \log t$), \[\int_2^x \frac{(\im \varphi)(t)}{t^2} \, dt \leq C_\epsilon \int_2^x \frac{dt}{t(\log t)^{1-\epsilon}} =(C_\epsilon/\epsilon)((\log x)^\epsilon - (\log 2)^\epsilon) \leq K_\epsilon (\log x)^\epsilon\] for $x \gg0$ and some constant $K_\epsilon.$ Combining the above bounds, the second statement of the theorem now follows.
\end{proof}

The following consequence of Theorems \ref{thm:landau} and \ref{thm:erdos_totient} is a key tool used to compute the realizable density of the finite abelian groups.

\begin{lem} \label{imphiL}
    Let $L\subseteq \N$ denote the set of sums of two squares. The natural density of the product set $S = (\im \varphi)L$ is zero.
\end{lem}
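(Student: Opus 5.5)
The plan is to bound the counting function $S(x)$ directly by summing over the $\varphi$-factor. Every $n \in S$ with $n \le x$ can be written as $n = v\ell$ with $v \in \im\varphi$, $\ell \in L$, $v \le x$ and $\ell \le x/v$. Counting pairs overcounts, so
\[ S(x) \le \sum_{\substack{v \in \im\varphi \\ v \le x}} L(x/v). \]
The aim is to show this sum is $o(x)$. By Theorem \ref{thm:landau} there is a constant $K'$ such that $L(y) \le K' y/\sqrt{\log y}$ for $y \ge 2$; trivially $L(y) \le y$ for all $y\geq 1$.

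The main obstacle is that $x/v$ can be small, where the Landau--Ramanujan saving disappears. I would therefore split the sum at $v = x^{1/2}$.

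For $v \le x^{1/2}$ we have $x/v \ge x^{1/2}$, so $\log(x/v) \ge \tfrac12\log x$. This part of the sum is then at most
\[ \frac{\sqrt{2}\,K' x}{\sqrt{\log x}} \sum_{\substack{v\in\im\varphi\\ v\le x}} \frac1v \le \sqrt{2}\,K' D_\epsilon\, x (\log x)^{\epsilon - 1/2} \]
by the second part of Theorem \ref{thm:erdos_totient}. Choosing $\epsilon = 1/4$, this is $o(x)$.

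For $x^{1/2} < v \le x$, I would use the trivial bound $L(x/v) \le x/v$. This part is then at most
\[ x \sum_{\substack{v\in\im\varphi\\ x^{1/2}<v\le x}} \frac1v, \]
and the crude bound $D_\epsilon(\log x)^\epsilon$ is not small enough. Instead I would apply Abel summation on the interval $(x^{1/2}, x]$ together with the first part of Theorem \ref{thm:erdos_totient}. The boundary terms are $O((\log x)^{\epsilon-1})$. The integral is
\[ \int_{x^{1/2}}^x \frac{C_\epsilon\,dt}{t(\log t)^{1-\epsilon}} = \frac{C_\epsilon}{\epsilon}\Bigl((\log x)^\epsilon - (\tfrac12\log x)^\epsilon\Bigr) = O\bigl((\log x)^\epsilon\bigr), \]
which is still not $o(1)$.

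So this step needs a different split. I would instead split at $v = x/(\log x)^{A}$ for a suitable $A$.
\begin{itemize}
\item For $v \le x/(\log x)^A$, we have $\log(x/v) \ge A\log\log x$. That only saves $\sqrt{\log\log x}$, which does not beat the factor $(\log x)^\epsilon$ coming from the reciprocal sum.
\item The better route is dyadic decomposition in $v$. For $v \in (V, 2V]$, the block contributes at most $(\im\varphi)(2V)\cdot L(x/V)$.
\end{itemize}
I would take this dyadic approach. Fix $\epsilon = 1/4$, and assume $\log V \asymp \log x$ and $\log(x/V)\gg\log x$ when $V \le x^{1/2}$. Then the blocks with $V \le x^{1/2}$ sum to $O(x(\log x)^{\epsilon-1/2})$ as above. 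For blocks with $V > x^{1/2}$, I would use $L(x/V) \le x/V$ and $(\im\varphi)(2V) \le C V/(\log x)^{3/4}$. Each such block is then $O(x/(\log x)^{3/4})$, and there are $O(\log x)$ blocks. That is too many.

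To fix the count, I would exchange roles in this range. With $\ell = n/v \le x^{1/2}$, I sum over $\ell \in L$ and count $v \le x/\ell$ in $\im\varphi$:
\[ \sum_{\ell \le x^{1/2}} (\im\varphi)(x/\ell) \le C_\epsilon\, x (\log x^{1/2})^{\epsilon-1} \sum_{\ell\in L,\ \ell \le x^{1/2}} \frac1\ell. \]
By Abel summation and Theorem \ref{thm:landau}, $\sum_{\ell\in L,\ \ell\le y} 1/\ell = O(\sqrt{\log y})$. This part is therefore $O(x(\log x)^{\epsilon - 1/2}) = o(x)$ with $\epsilon = 1/4$.

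Combining the two ranges gives $S(x) = o(x)$, which is what Lemma \ref{imphiL} claims. The key point is the symmetric hyperbola-type split at $\sqrt{x}$: in the first range the Landau--Ramanujan bound is applied to $L(x/v)$, and in the second range the Erdős bound is applied to $(\im\varphi)(x/\ell)$.
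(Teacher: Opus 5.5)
Your final argument is correct, but it takes a different route from the paper. Once you strip out the abandoned attempts (Abel summation over $(x^{1/2},x]$, the cut at $x/(\log x)^A$, the dyadic blocks), what remains is a Dirichlet-hyperbola split. Every $n=v\ell\le x$ has $v\le\sqrt{x}$ or $\ell<\sqrt{x}$, so
\[ S(x)\le \sum_{\substack{v\in\im\varphi\\ v\le\sqrt{x}}}L(x/v)+\sum_{\substack{\ell\in L\\ \ell<\sqrt{x}}}(\im\varphi)(x/\ell), \]
and in both sums the argument of the counting function is at least $\sqrt{x}$, so its logarithm is at least $\tfrac12\log x$. You bound the first sum by Theorem \ref{thm:landau} together with the reciprocal-sum half of Theorem \ref{thm:erdos_totient}. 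You bound the second by the counting half of Theorem \ref{thm:erdos_totient} together with $\sum_{\ell\in L,\,\ell\le y}1/\ell=O(\sqrt{\log y})$, which follows from Landau--Ramanujan by the same Abel summation the paper applies to $\im\varphi$. Both pieces are $O(x(\log x)^{\epsilon-1/2})$.

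The paper never swaps the roles of the two factors. It keeps the single sum $\sum_v L(x/v)$ over all $v\le x/2$ and applies the Landau bound only where $x/v\ge 2e^{\sqrt{\log x}}$, saving $(\log x)^{-1/4}$ against $(\log x)^{\epsilon}$. It then handles the awkward range $x/(2e^{\sqrt{\log x}})<v\le x/2$, where $x/v$ is small, by cutting it into about $\sqrt{\log x}$ blocks of ratio $e$ and bounding each block with Erd\H{o}s's counting bound at exponent close to $1$.

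The trade-off is as follows. The paper needs no reciprocal-sum estimate for $L$, but pays with a tuned cut point and a block decomposition. Your version needs that one extra, easy estimate, and in return it is shorter and symmetric. As written it also gives the better rate $O(x(\log x)^{\epsilon-1/2})$, versus the paper's $O(x(\log x)^{\epsilon-1/4})$. Finally, it makes transparent that the method works whenever the two logarithmic savings (here $1-\epsilon$ and $1/2$) sum to more than $1$.
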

\begin{proof}
    Since $\im \varphi$ has density zero, we can remove 1 from $L$ so that $L(x) = 0$ for $x < 2.$ Theorem \ref{thm:landau} implies that there is a constant $C$ such that $L(x) \leq Cx/\sqrt{\log x}$ for all $x > 1$. We have \[
    \begin{aligned}
        S(x) \leq \sum_{\substack{v \in \im \varphi \\ v \leq x}} L(x/v) = \sum_{\substack{v \in \im \varphi \\ v \leq x/2}} L(x/v) \leq \sum_{\substack{v \in \im \varphi \\ v \leq x/2}} \frac{Cx/v}{\sqrt{\log x/v}} = Cx \underbrace{\sum_{\substack{v \in \im \varphi \\ v \leq x/2}} \frac{1}{v\sqrt{\log x/v}}}_{V(x)}.\\
    \end{aligned}
    \] Now, to prove that $S(x)/x \to 0$ as $x \to \infty,$ it suffices to prove that $V(x) \to 0$ as $x \to \infty.$

    Assume $x$ is large and let $M = \log x$. We will break $V(x)$ into pieces; the first piece will include terms with $v \leq \frac{x}{2e^{\sqrt{M}}}.$ For such $v$, $2e^{\sqrt{M}} \leq x/v$ and so $\log x/v \geq \log 2e^{\sqrt{M}}.$ The sum of the terms in $V(x)$ with $v \leq \frac{x}{2e^{\sqrt{M}}}$ is therefore bounded above by \[\frac{1}{\sqrt{\log{2e^{\sqrt{M}}}}} \sum_{\substack{v \in \im \varphi \\ v \leq x}}\frac{1}{v}.\] For $x$ sufficiently large and any $\epsilon > 0$, the summation in the expression above is bounded by $C'(\log x)^\epsilon$ for some constant $C'$ (see Theorem \ref{thm:erdos_totient}), and hence \[ \frac{1}{\sqrt{\log{2e^{\sqrt{M}}}}} \sum_{\substack{v \in \im \varphi \\ v \leq x}}\frac{1}{v} \leq \frac{C'(\log e^M)^\epsilon}{\sqrt{\log{2e^{\sqrt{M}}}}}\leq \frac{C'M^\epsilon}{\sqrt{\log{2}+\sqrt{M}}} \leq C'M^{\epsilon - 1/4}. \] The last expression above goes to zero as $x \to \infty$ if we take $0 < \epsilon < 1/4$. 

    Next consider intervals $\frac{x}{2e^{i+1}} \leq v \leq \frac{x}{2e^i}$. The remaining terms in $V(x)$ are covered by the $v$ in these intervals for integers $i$ with  $0 \leq i \leq \sqrt{M}.$ For any $0 < \epsilon < 1,$ there is a constant $C''$ such that $(\im \varphi)(x) \leq C'' x/(\log x)^\epsilon$ (again, see Theorem \ref{thm:erdos_totient}). For a fixed $i$ we have \[
    \begin{aligned}
    \sum_{\frac{x}{2e^{i+1}} \leq v \leq \frac{x}{2e^i}}\frac{1}{v\sqrt{\log x/v}} &\leq \frac{C''x/(2e^i)}{(\log (x/(2e^i)))^\epsilon} \frac{1}{(x/2e^{i+1})(\sqrt{\log (2e^i)})} \\ &\leq \frac{C''e}{(M - i - \log 2)^\epsilon(\sqrt{i + \log 2})} \\
    & \leq \frac{C''e}{(M - \sqrt{M} - \log 2)^\epsilon(\sqrt{\log 2})}.
    \end{aligned}\] Summing over all intervals where $0 \leq i \leq \sqrt{M},$ we obtain for some constant $C'''$ an upper bound of \[ C''' \frac{1 + \sqrt{M}}{(M - \sqrt{M} - \log 2)^\epsilon}. \] Since this quantity goes to zero as $x \to \infty$ if we pick $1/2 < \epsilon < 1$, the proof is complete.
\end{proof}

\subsection{Integer partitions}
For any natural number $n$, let $p(n)$ denote the number of unordered partitions of $n$ into positive integers. The function $p(n)$ has rich number-theoretic properties and its asymptotic growth was studied by Hardy and Ramanujan.

\begin{thm}[\cite{HardyRamanujan1918}]
\label{thm:hardy_ramanujan}
Let $p(n)$ denote the number of integer partitions of $n$. Then,
\[ p(n) \sim \frac{1}{4n\sqrt{3}} \exp\left( \pi \sqrt{\frac{2n}{3}} \right). \]
\end{thm}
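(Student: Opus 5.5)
Since this statement is quoted from Hardy and Ramanujan, the paper will simply cite \cite{HardyRamanujan1918}. If I had to prove it myself, I would use the saddle-point method applied to the generating function
\[ F(q)=\sum_{n\ge0}p(n)q^n=\prod_{k\ge1}(1-q^k)^{-1}. \]
The key input is the modular transformation of the Dedekind eta function. Writing $q=e^{-t}$ with $\operatorname{Re} t>0$, it gives
\[ F(e^{-t})=\sqrt{\tfrac{t}{2\pi}}\,\exp\!\left(\tfrac{\pi^2}{6t}-\tfrac{t}{24}\right)F\!\left(e^{-4\pi^2/t}\right). \]
Near $t=0$ the last factor is $1+O(e^{-4\pi^2\operatorname{Re}(1/t)})$. (Alternatively one can derive $\log F(e^{-t})=\pi^2/(6t)+\tfrac12\log(t/2\pi)+O(t)$ by Euler--Maclaurin or Mellin transform applied to $\sum_k -\log(1-e^{-kt})$, valid in a sector.)

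The steps are as follows.

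\textbf{Cauchy integral.} Write
\[ p(n)=\frac{1}{2\pi}\int_{-\pi}^{\pi}F(e^{-t_0-i\theta})e^{n(t_0+i\theta)}\,d\theta . \]

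\textbf{Saddle point.} Choose $t_0=\pi/\sqrt{6n}$, the minimizer of $\pi^2/(6t)+nt$. At this point the exponent equals $2\pi^2/(6t_0)=\pi\sqrt{2n/3}$.

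\textbf{Major arc.} On $|\theta|\le t_0^{1+\delta}$ for a suitable small $\delta$, expand to second order. The phase is $\pi\sqrt{2n/3}$ minus $\tfrac{\pi^2}{6t_0^3}\theta^2$ plus higher order terms. The Gaussian integral contributes $\sqrt{6t_0^3/\pi^2}\cdot\sqrt{\pi}/(2\pi)$. Multiplying by the prefactor $\sqrt{t_0/2\pi}$ gives
\[ \frac{t_0^2\sqrt{6}}{2\pi\sqrt{2\pi}}\cdot\sqrt{\pi}\,e^{\pi\sqrt{2n/3}} . \]
Substituting $t_0^2=\pi^2/(6n)$ should collapse this to $\frac{1}{4n\sqrt3}e^{\pi\sqrt{2n/3}}$. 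Checking the constants: $\frac{\pi^2}{6n}\cdot\frac{\sqrt6}{2\sqrt2\,\pi}=\frac{\pi\sqrt3}{6n\cdot 2}\cdot\frac{1}{\pi}\cdot\ldots$; this is routine bookkeeping and I expect it to produce $1/(4\sqrt3 n)$.

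\textbf{Minor arc.} The main obstacle is showing that the rest of the circle contributes negligibly, i.e.\ that $|F(e^{-t_0-i\theta})|$ is exponentially smaller than $F(e^{-t_0})$ when $|\theta|>t_0^{1+\delta}$. I would use the elementary bound
\[ \log|F(q)|-\log F(|q|)=-\sum_{m\ge1}\frac{1}{m}\,\frac{|q|^m-\operatorname{Re}(q^m)}{1-|q|^m}\le -\operatorname{Re}\!\left(\frac{|q|}{1-|q|}-\frac{q}{1-q}\right). \]
The right-hand side is of order $-c\min(\theta^2/t_0^3,\,1/t_0)$. This yields a saving of $\exp(-c\,t_0^{2\delta-1})$, which dominates the polynomial factors once $\delta<1/2$.

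Combining the major-arc asymptotic with the negligible minor-arc contribution gives the stated asymptotic for $p(n)$.
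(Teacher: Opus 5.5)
The paper gives no proof of this statement. It is quoted from \cite{HardyRamanujan1918} and used only in Theorem~\ref{p-groups}, where all that matters is $p(n-1)/p(n)\to 1$. So your sketch is an independent route, and it is a sound one.

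It is the principal-arc part of the Hardy--Ramanujan circle method: the arc around the point $1$, controlled by the eta transformation, with their full Farey dissection replaced by one crude bound on the rest of the circle. What this buys is a short, self-contained argument giving the leading term with relative error $o(1)$, which is exactly what the statement claims. The full method yields their asymptotic series with absolute error $O(n^{-1/4})$, which the paper never needs.

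The following parts of your sketch are correct:
\begin{itemize}
\item the transformation formula;
\item the saddle point $t_0=\pi/\sqrt{6n}$ with value $\pi\sqrt{2n/3}$;
\item the curvature $\pi^2/(6t_0^3)$;
\item the minor-arc saving of order $\min(\theta^2/t_0^3,1/t_0)$, which beats the $t_0^{-3/2}$ loss relative to the main term once $\delta<1/2$.
\end{itemize}

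Three details need repair.

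\textbf{(i) The constant.} In combining factors you dropped a $1/\pi$. The product is
\[ \frac{1}{2\pi}\sqrt{\frac{6t_0^3}{\pi}}\sqrt{\frac{t_0}{2\pi}}=\frac{\sqrt3\,t_0^2}{2\pi^2}=\frac{\sqrt3}{12n}=\frac{1}{4\sqrt3\,n}. \]
Your displayed expression equals $\sqrt3\,t_0^2/(2\pi)$ and would give $\pi/(4\sqrt3\,n)$. The stray factor $1/\pi$ in your ``check'' has no source; the correct bookkeeping gives the stated constant exactly.

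\textbf{(ii) The minor-arc display.} The middle expression is not equal to $\log|F(q)|-\log F(|q|)$, because in general
\[ \operatorname{Re}\frac{q^m}{1-q^m}\neq\frac{\operatorname{Re}(q^m)}{1-|q|^m}, \]
and the two already differ when $q^m$ is a negative real. Argue instead that
\[ \log F(|q|)-\log|F(q)|=\sum_{k,m\ge1}\frac1m\bigl(|q|^{km}-\operatorname{Re}(q^{km})\bigr) \]
has nonnegative terms, and keep only the terms with $m=1$. This gives your final inequality. With $r=e^{-t_0}$ one has exactly
\[ \frac{r}{1-r}-\operatorname{Re}\frac{q}{1-q}=\frac{r(1+r)(1-\cos\theta)}{(1-r)\bigl((1-r)^2+2r(1-\cos\theta)\bigr)}, \]
which confirms your order of magnitude on all of $[-\pi,\pi]$.

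\textbf{(iii) The range of $\delta$ on the major arc.} On $|\theta|\le t_0^{1+\delta}$ the cubic term of the phase has size $\asymp\theta^3/t_0^4$, which reaches $t_0^{3\delta-1}$. A uniform Taylor-remainder argument therefore needs $1/3<\delta<1/2$, not a ``small'' $\delta$. A cleaner route avoids Taylor remainders: for $t=t_0+i\theta$ one has the exact identity
\[ \frac{\pi^2}{6t}+nt-\pi\sqrt{2n/3}=-\frac{\pi^2\theta^2}{6t_0^3\,(1+i\theta/t_0)}. \]
Its real part is at most $-\pi^2\theta^2/(12t_0^3)$ for $|\theta|\le t_0$. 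Substituting $\theta=t_0^{3/2}v$ and applying dominated convergence then gives the major-arc asymptotic for any $\delta\in(0,1/2)$.
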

Because $p(n)$ exhibits sub-exponential growth, any structure that is enumerated by partitions of a logarithmic bound (e.g., partitions of $\log_2 x$) will have a counting function bounded by $\exp(C \sqrt{\log x})$, making them sparse. We use this theorem to prove that the realizable density of the finite abelian 2-groups is 1.

\subsection{Stolz--Ces\`aro Theorem}
When computing the realizable densities of finite $p$-groups, we make use of the following discrete analog of L'H\^opital's rule that is not so well known.

\begin{thm}[\cite{choudary2014real}]
Let $(a_n)$ and $(b_n)$ be sequences of real numbers with
$b_n>0$ for all sufficiently large $n$, and suppose that
$\sum_{k=1}^n b_k \to \infty .$  Then,
\[
\lim_{n\to\infty}
\frac{\sum_{k=1}^n a_k}{\sum_{k=1}^n b_k}
=
\lim_{n\to\infty}\frac{a_n}{b_n},
\]
provided $\lim_{n \rightarrow \infty} a_n/b_n$ exists.
\end{thm}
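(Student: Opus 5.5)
This is the Stolz--Ces\`aro theorem, a standard result cited from \cite{choudary2014real}, so the paper presumably invokes it without proof. If I were to prove it, I would argue directly with $\epsilon$'s, as in the proof of L'H\^opital's rule. Set $A_n=\sum_{k\le n}a_k$ and $B_n=\sum_{k\le n}b_k$, and let $L=\lim a_n/b_n$, assumed finite; the infinite case is analogous, with one-sided bounds.

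The first step is to fix $\epsilon>0$ and choose $N$ so that for $k>N$ we have $b_k>0$ and $|a_k-Lb_k|<\epsilon b_k$. Summing these inequalities for $N<k\le n$ gives
\[ |(A_n-A_N)-L(B_n-B_N)|<\epsilon(B_n-B_N). \]
Here positivity of $b_k$ is essential: it makes the weights $b_k$ nonnegative, so the termwise inequalities add.

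The second step is to divide by $B_n$, which is positive and tends to infinity for large $n$. This yields
\[ \left|\frac{A_n}{B_n}-L\right|\le \frac{|A_N-LB_N|}{B_n}+\epsilon\,\frac{B_n-B_N}{B_n}. \]
Since $N$ is fixed and $B_n\to\infty$, the first term tends to $0$, and the fraction in the second term tends to $1$. Hence $\limsup|A_n/B_n-L|\le\epsilon$, and letting $\epsilon\to 0$ completes the argument.

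The only delicate point is bookkeeping. One must use "$b_n>0$ for sufficiently large $n$" to ensure both that the summed inequality is valid and that $B_n>0$ eventually. Beyond that, there is no real obstacle.
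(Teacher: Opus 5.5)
Your proof is correct. The paper gives no proof of its own and simply cites \cite{choudary2014real}. Your $\epsilon$-argument is the standard textbook proof: sum the eventual inequalities $|a_k - Lb_k| < \epsilon b_k$, then divide by $B_n \to \infty$. The one-sided variant you sketch also covers an infinite limit, so nothing is missing.
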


\subsection{Products and unions involving sets of density zero}

If we have two sets $A$ and $B$ of natural numbers each with natural density zero, then the product set \[AB = \{ab \mid a \in A, b \in B\}\] need not have natural density zero. For example, if $X$ is the set of natural numbers whose prime divisors are 1 mod 4 and if $Y$ is the set of natural numbers whose prime divisors are 2 or 3 mod 4, then $\nd(X) = \nd(Y) = 0$, $XY \cup\{1\} = \N$,  and $\nd(XY) = 1$. Note that the sum of the reciprocals of the elements in each of $X$ and $Y$ diverges. The next lemma shows that if even one of the two zero-density sets $A$ and $B$ has a convergent reciprocal sum, then we can conclude that $AB$ has density zero. In particular, if the sum of the reciprocals of elements in a subset of $\N$ converges, then the set has density zero.

\begin{lem}
\label{lem:product_density}
Let $A$ and $B$ be subsets of the natural numbers. Suppose that the natural density of $B$ is $0$ and that the sum of the reciprocals of the elements in $A$ converges; that is, $\sum_{a \in A} \frac{1}{a} < \infty$. Then the natural density of the product set $AB = \{ab \mid a \in A, b \in B\}$ is $0$.
\end{lem}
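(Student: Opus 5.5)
Fix $\epsilon > 0$. The plan is to split the count of $AB$ up to $x$ according to whether the factor from $A$ is small or large. Throughout we bound $(AB)(x)$ by the number of pairs $(a,b) \in A \times B$ with $ab \le x$, since every element of $AB \cap [1,x]$ arises from at least one such pair. This gives
\[ (AB)(x) \le \sum_{a \in A,\, a \le x} B(x/a). \]

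First, the tail. Because $\sum_{a\in A} 1/a$ converges, we can choose $N$ with $\sum_{a \in A,\, a > N} 1/a < \epsilon$. For every $a$ we have the trivial bound $B(x/a) \le x/a$. So the terms with $a > N$ contribute at most
\[ \sum_{a > N} \frac{x}{a} < \epsilon x. \]

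Next, the finitely many small $a$. The terms with $a \le N$ are $B(x/a)$ for $a$ in a fixed finite set. Since $\nd(B) = 0$, each of them satisfies
\[ \frac{B(x/a)}{x} = \frac{1}{a}\cdot\frac{B(x/a)}{x/a} \to 0 \quad \text{as } x \to \infty. \]
Because there are only finitely many such $a$ (at most $N$), their total contribution divided by $x$ also tends to $0$. Hence
\[ \limsup_{x\to\infty} \frac{(AB)(x)}{x} \le \epsilon. \]
Letting $\epsilon \to 0$ gives that $\nd(AB)$ exists and equals $0$.

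The only point needing care is to make the choice of $N$ depend on $\epsilon$ alone, not on $x$. That way the finite part can be handled by a limit in $x$, while the tail is controlled uniformly in $x$ by the convergent reciprocal sum. There is no serious obstacle beyond this; the argument is a standard $\epsilon$-splitting.

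As a special case, the remark before the lemma follows by taking $B = \N$ minus the elements not needed. More directly, the same tail bound shows that a set $A$ with convergent reciprocal sum has density zero. Apply the argument with $B = \{1\}$, which has density zero: then $AB = A$, so $\nd(A) = 0$.
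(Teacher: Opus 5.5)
Your proof is correct and is essentially the paper's argument: bound $(AB)(x)$ by $\sum_{a} B(x/a)$, show the finitely many small-$a$ terms vanish after dividing by $x$ because $\nd(B)=0$, and control the tail uniformly in $x$ using $B(x/a)\le x/a$ and the convergent reciprocal sum. The paper splits at an index $M$ in an enumeration of $A$ rather than at a size threshold $N$, which makes no difference. Your closing use of $B=\{1\}$ correctly recovers the fact that a set with convergent reciprocal sum has density zero. The preceding suggestion to take $B=\N$ should be dropped, since $\N$ does not have density zero.
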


\begin{proof}
Let $C = AB$. Enumerate the elements in $A$: $A = \{a_1, a_2, \dots\}.$ To compute $C(x)$, we observe that for each fixed $a \in A$, the number of elements $b \in B$ such that $ab \le x$ is exactly $B(x/a)$. Summing this quantity over all possible choices of $a \in A$ provides an upper bound for $C(x)$. Thus, for any fixed positive integer $M$,
\[ \begin{aligned} \frac{C(x)}{x} &\leq \sum_{k=1}^\infty \frac{B(x/a_k)}{x} \\&= \sum_{k=1}^M \frac{B(x/a_k)}{x} + \sum_{k=M+1}^\infty \frac{B(x/a_k)}{x} \\&\leq \sum_{k=1}^M \frac{1}{a_k} \left(\frac{B(x/a_k)}{x/a_k}\right) + \sum_{k=M+1}^\infty \frac{1}{a_k}.\end{aligned}\] Since $\nd(B) = 0$, the limit as $x \to \infty$ of the first summation just above is zero, and thus \[ \nd^+(C) \leq \sum_{k=M+1}^\infty \frac{1}{a_k}\] for every positive integer $M$. Since the series $\sum_{k=1}^\infty 1/a_k$ converges, $\nd^+(C) = 0$ is forced, from which it follows that $\nd(C) = 0$.
\end{proof}

Finally, we make the simple observation that the natural density of set is unaffected by the inclusion or exclusion of a zero-density subset.

\begin{lem} \label{uniondensity}
Let $A$ and $B$ be subsets of the natural numbers. If the natural density of $A$ is $0$ and the natural density of $B$ exists, then the natural density of $A \cup B$ exists and is equal to the natural density of $B$.
\end{lem}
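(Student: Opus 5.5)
The plan is to compare counting functions directly and use subadditivity together with the monotonicity of $(A\cup B)(x)$. For every $x>0$ we have the elementary sandwich
\[ B(x) \le (A \cup B)(x) \le A(x) + B(x), \]
because $A\cup B$ contains $B$ and $|(A\cup B)\cap[1,x]| \le |A\cap[1,x]| + |B\cap[1,x]|$. Dividing by $x$ gives
\[ \frac{B(x)}{x} \le \frac{(A\cup B)(x)}{x} \le \frac{A(x)}{x} + \frac{B(x)}{x}. \]

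Next we let $x \to \infty$. By hypothesis $A(x)/x \to 0$, and $B(x)/x \to \nd(B)$ because the natural density of $B$ exists. The left-hand side therefore tends to $\nd(B)$. The right-hand side tends to $0 + \nd(B) = \nd(B)$. By the squeeze theorem, $(A\cup B)(x)/x$ converges and its limit is $\nd(B)$. This shows both that $\nd(A\cup B)$ exists and that it equals $\nd(B)$.

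There is no real obstacle in this argument. The only point to check is that we need no disjointness assumption: the upper bound $A(x)+B(x)$ is a union bound, so any overlap between $A$ and $B$ only helps. One could also phrase the argument via $\nd^{\pm}$, showing $\nd^-(A\cup B)\ge \nd(B)$ and $\nd^+(A\cup B)\le \nd(B)$, but the squeeze formulation already gives existence of the limit directly.
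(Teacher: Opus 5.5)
Your proof is correct and matches the paper's: it uses the same sandwich $B(x) \le (A\cup B)(x) \le A(x)+B(x)$, divides by $x$, and concludes by the squeeze theorem. Your opening reference to the monotonicity of $(A\cup B)(x)$ is never used and can be dropped.
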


\begin{proof} It is clear that \[ B(x) \leq (A\cup B)(x) \leq A(x) + B(x).\] Dividing by $x$, taking the limit as $x \to \infty$, and using the fact that $\nd(B)$ exists, we obtain $\nd(B) = \nd(A\cup B)$ by the squeeze theorem.
\end{proof}

\section{Realizable density of finite cyclic groups}
In \cite{PearsonSchneider1970} it is proved that a finite cyclic group $\C_n$ is the group of units of some ring if and only if its order is a product of pairwise coprime integers, each drawn from the set $R_0 = R_1 \cup R_2 \cup R_3 \cup R_4$, where the $R_i$ are defined as follows:
\begin{align*}
R_1 &= \{ q^t - 1 \mid q \text{ is prime, } t \ge 1 \} \\
R_2 &= \{ q^s(q-1) \mid q \text{ is an odd prime, } s \ge 1 \} \\
R_3 &= \{ 4m + 2 \mid m \ge 0 \} \\
R_4 &= \{ 4k \mid k \text{ is an odd positive integer, and } p \mid k \implies p \equiv 1 \pmod 4 \}.
\end{align*}

\n Let $R$ denote the set of all such realizable orders $n$. Since the number of isomorphism classes of cyclic groups of order at most $n$ is just $n$, and since there is a unique cyclic group of any given order, the realizable density of $\mathcal{Z}$ is exactly the natural density of $R$. We can immediately observe that $R_4$ has natural density zero by Landau's theorem (Theorem \ref{thm:landau}) because every integer in $R_4$ is representable as a sum of two squares, and $R_2$ has natural density zero since it is a subset of the image of $\varphi$ (see Theorem \ref{thm:erdos_totient}). To prove that $\delta(\mathcal{Z}) = 1/4$ we will first show that the natural density of $R_1$ is zero and the natural density of $R_3$ is $1/4$; then, we will show how to combine these results to finish the computation of $\delta(\mathcal{Z}).$

\begin{lem} \label{ppd}
The natural density of $R_1 = \{ q^t - 1 \mid q \text{ is prime, } t \ge 1 \}$ is $0$.
\end{lem}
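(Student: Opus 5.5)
We split $R_1$ according to the exponent $t$: write $R_1 = P' \cup Q$, where $P' = \{q-1 \mid q \text{ prime}\}$ collects the terms with $t = 1$ and $Q = \{q^t - 1 \mid q \text{ prime}, t \ge 2\}$ collects the rest. We bound the counting function of each piece separately. Then Lemma \ref{uniondensity}, or simply the inequality $R_1(x) \le P'(x) + Q(x)$, shows that $R_1$ has natural density zero once each piece does.

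\emph{The case $t = 1$.} The map $q \mapsto q-1$ is injective, so $P'(x) \le \pi(x+1)$. By the Prime Number Theorem (Chebyshev's bounds suffice),
\[ \frac{\pi(x+1)}{x} \le \frac{C(x+1)}{x\log(x+1)} \to 0 . \]
Hence $\nd(P') = 0$.

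\emph{The case $t \ge 2$.} Every element $n = q^t - 1 \le x$ with $t \ge 2$ yields a perfect power $q^t \le x+1$ with exponent at least $2$. Each such prime power has at most $\lfloor \log_2(x+1) \rfloor$ possible exponents $t$. For a fixed $t \ge 2$, the number of integers $m$ with $m^t \le x+1$ is at most $(x+1)^{1/t} \le (x+1)^{1/2}$. Summing over $2 \le t \le \log_2(x+1)$ gives
\[ Q(x) \le (x+1)^{1/2}\log_2(x+1), \]
and this is $o(x)$. So $\nd(Q) = 0$.

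Combining the two pieces, $R_1(x)/x \le P'(x)/x + Q(x)/x \to 0$, so $\nd(R_1) = 0$. There is no real obstacle here. The only point needing care is the $t = 1$ term, which is exactly where the prime number theorem, or the weaker Chebyshev bound, is required. The higher powers are handled by the elementary count of perfect powers.
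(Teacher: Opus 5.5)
Your proof is correct and essentially matches the paper's. The paper also splits off the $t=1$ term, handles it with the prime number theorem, and bounds the $t\ge 2$ terms by $(\log_2 x)\,x^{1/2}$. The only difference is that the paper first uses translation invariance to reduce to the density of all prime powers, whereas you handle the shift directly via $q^t \le x+1$.
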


\begin{proof}
Since natural density is translation invariant and a subset of a zero-density subset has density zero, it is enough to show that the density of all prime powers is zero. To that end, let $P$ denote the set of all prime powers. Since $p^t \leq x \iff p \leq x^{1/t},$ \[P(x) \leq \sum_{t=1}^\infty \pi(x^{1/t}) = \sum_{t=1}^{\lfloor \log_2 x \rfloor} \pi(x^{1/t}),\] where the equality above follows from the fact that $\pi(x^{1/t}) = 0$ is zero when $t > \log_2 x.$ Now,
\[
\begin{aligned}
\frac{P(x)}{x} &\leq \frac{\pi(x)}{x} + \sum_{t=2}^{\lfloor \log_2 x \rfloor} \frac{\pi(x^{1/t})}{x}\\
&\leq \frac{\pi(x)}{x} + \frac{(\log_2 x)x^{1/2}}{x} \\
&= \frac{\pi(x)}{x} + \frac{(\log_2 x)}{\sqrt{x}}
\end{aligned}
\]
Using the prime number theorem, the limit of the expression just above is zero, so $\nd(P) = 0$, and the proof is complete.
\end{proof}

\begin{lem} \label{b3density}
The natural density of $R_3 = \{ 4m + 2 \mid m \ge 0 \}$ is $\frac{1}{4}$.
\end{lem}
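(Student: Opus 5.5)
This is a direct count. The set $R_3$ consists exactly of the positive integers congruent to $2$ modulo $4$, so it suffices to compute $R_3(x)$ explicitly and let $x \to \infty$.

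For a real number $x \geq 2$, an integer $4m+2$ with $m \geq 0$ lies in $[1,x]$ precisely when $0 \leq m \leq (x-2)/4$. Hence
\[ R_3(x) = \left\lfloor \frac{x-2}{4} \right\rfloor + 1 = \left\lfloor \frac{x+2}{4} \right\rfloor, \]
and $R_3(x) = 0$ for $x < 2$. Using $y - 1 < \lfloor y \rfloor \leq y$, we get
\[ \frac{x+2}{4} - 1 < R_3(x) \leq \frac{x+2}{4}. \]
Dividing by $x$ gives
\[ \frac{1}{4} - \frac{1}{2x} < \frac{R_3(x)}{x} \leq \frac{1}{4} + \frac{1}{2x}. \]
By the squeeze theorem, $R_3(x)/x \to 1/4$, so $\nd(R_3) = 1/4$.

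There is no real obstacle here. The only point requiring care is the off-by-one in the floor expression, and the two-sided bound above makes even that irrelevant. The same argument shows that any residue class modulo $k$ has natural density $1/k$.
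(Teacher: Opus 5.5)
Your proof is correct and follows essentially the same route as the paper: it also computes $R_3(x) = \left\lfloor \frac{x+2}{4} \right\rfloor$ directly and lets $x \to \infty$. Your two-sided floor bound with the squeeze just writes out the limit step that the paper leaves implicit.
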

\begin{proof}
The set $R_3$ consists of all natural numbers that are congruent to $2 \pmod 4$. Since $1 \leq 4m+2 \leq x$ if and only if $0 \leq m \leq (x - 2)/4,$ we have \[R_3(x) = \left\lfloor \frac{x+2}{4}\right\rfloor.\] Hence, \[\lim_{x\to\infty} \frac{R_3(x)}{x} = \lim_{x\to\infty} \frac{\left\lfloor \frac{x+2}{4}\right\rfloor}{x}= 1/4,\] so $\nd(R_3) = 1/4.$
\end{proof}

\begin{lem} \label{lem:odd_sparse}
Let $O$ be the set of all integers formed by multiplying pairwise coprime elements of $\{2^t - 1 \mid t \ge 1\}$.  Then,\[\sum_{o \in O} \frac{1}{o} < \infty.\] In particular, the natural density of $O$ is 0.
\end{lem}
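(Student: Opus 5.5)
Write $M_t = 2^t - 1$. Every element $o \in O$ is a product $M_{t_1} M_{t_2} \cdots M_{t_k}$ of pairwise coprime Mersenne numbers, and we may assume the $t_i$ are distinct and all at least $2$, since $M_1 = 1$ contributes nothing. So each $o \in O$ arises from at least one finite set $T \subseteq \{2,3,4,\dots\}$, and distinct elements of $O$ come from distinct sets $T$. We will use the crude bound
\[ \sum_{o \in O} \frac{1}{o} \le \sum_{T} \prod_{t \in T} \frac{1}{2^t - 1} = \prod_{t \ge 2}\left(1 + \frac{1}{2^t-1}\right) - 1, \]
where $T$ runs over all finite subsets of $\{2,3,\dots\}$. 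The coprimality condition is simply discarded; this only enlarges the sum. The identity on the right is the usual expansion of an infinite product over finite subsets. It is valid because all terms are nonnegative, and it can be checked by comparing partial products over $t \le N$ with the sums over $T \subseteq \{2,\dots,N\}$ and letting $N \to \infty$.

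Next we show the infinite product converges. Since $2^t - 1 \ge 2^{t-1}$, we have $\sum_{t\ge 2} \frac{1}{2^t-1} \le \sum_{t \ge 2} 2^{1-t} = 1 < \infty$. Combining this with $1+u \le e^u$ gives
\[ \prod_{t\ge 2}\left(1+\frac{1}{2^t-1}\right) \le e, \]
so $\sum_{o\in O} 1/o \le e - 1 < \infty$.

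For the final assertion we show that a set $A$ with $\sum_{a\in A} 1/a<\infty$ has density zero. One route is to apply Lemma \ref{lem:product_density} with $B = \{1\}$, which has density zero, so that $AB = A$. Alternatively, we can argue directly: for any $N$, $A(x) \le N + \#\{a \in A: N < a \le x\} \le N + x\sum_{a > N,\, a\in A} 1/a$. Dividing by $x$ and letting first $x\to\infty$ and then $N\to\infty$ gives $\nd(A) = 0$. Either way, $O$ has density zero.

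The only point that needs care is the bookkeeping in the first step: we must make sure every $o \in O$ is counted at least once by a set $T$ (possibly more than once, which is harmless for an upper bound), and we must handle the trivial factor $M_1 = 1$. There is no real analytic obstacle, because the geometric decay of $1/(2^t-1)$ makes the product converge easily.
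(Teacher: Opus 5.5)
Your argument is correct in substance and is essentially the paper's. Both keep only the distinctness of the exponents, discard the rest of the coprimality condition, and dominate $\sum_{o\in O}1/o$ by an infinite product over distinct exponents. The paper first uses $2^{1+t_i}-1\ge 2^{t_i}$ and counts via partitions into distinct parts, getting $\sum_t p'(t)2^{-t}=\prod_{m\ge1}(1+2^{-m})$. You instead expand $\prod_{t\ge2}\bigl(1+\frac{1}{2^t-1}\bigr)$ over finite subsets directly, which is the same computation without the partition language.

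There is one bookkeeping slip, at exactly the spot you flagged as needing care. When $T$ runs over all finite subsets of $\{2,3,\dots\}$, the empty set contributes the term $1$, and this term is what accounts for $o=1\in O$. So the identity should read $\sum_T\prod_{t\in T}\frac{1}{2^t-1}=\prod_{t\ge2}\bigl(1+\frac{1}{2^t-1}\bigr)$, with no ``$-1$''. As written, your displayed inequality is false: its left side exceeds $1+\frac13+\frac17$, while $\prod_{t\ge2}(1-2^{-t})^{-1}-1\approx 0.73$. The correct final bound is therefore $e$ rather than $e-1$, which still gives convergence and hence density zero.
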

\begin{proof}
Other than 1, each element in $O$ has the form $\prod_{i=1}^k (2^{1+ t_i} - 1)$ where the integers $t_i \ge 1$ are distinct since the factors are pairwise coprime. Let $t = \sum_{i=1}^kt_i$. Then, \[2^t \leq \prod_{i=1}^k (2^{1+ t_i} - 1).\] Let $p'(n)$ denote the number of unordered partitions of $n$ into distinct parts. The inequality above implies that (set $p'(0) = 1$) \[ \sum_{o \in O} \frac{1}{o} \leq \sum_{t=0}^\infty \frac{p'(t)}{2^t} = \prod_{m=1}^\infty (1 + (1/2)^m).\] The product converges since $\sum_{m=1}^\infty (1/2)^m$ converges. Since the sum of the reciprocals of the elements in $O$ converges, $\nd(O) = 0.$
\end{proof}

We now have all of the tools needed to solve our problem for cyclic groups. 

\begin{thm}
The realizable density of the class $\mathcal{Z}$ of finite cyclic groups is  $\delta(\mathcal{Z}) = 1/4$.
\end{thm}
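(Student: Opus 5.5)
The plan is to show that the set $R$ of realizable orders differs from $R_3$ only by a set of natural density zero. Lemma \ref{uniondensity} together with Lemma \ref{b3density} then gives $\nd(R) = 1/4$, and hence $\delta(\mathcal{Z}) = 1/4$, since $\delta(\mathcal{Z}) = \nd(R)$ as noted above.

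First, $R_3 \subseteq R$, because each element of $R_3$ is itself a (one-factor) product of elements of $R_0$. Conversely, any $n \equiv 2 \pmod 4$ lies in $R_3$. So it remains to show that the set $Z = R \setminus R_3$ of realizable orders that are odd or divisible by $4$ has density zero.

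To analyze $Z$, write $n \in R$ as $n = r_1 \cdots r_k$ with the $r_i \in R_0$ pairwise coprime. Two observations control the parity of the factors.
\begin{itemize}
\item Every element of $R_2$, $R_3$ and $R_4$ is even, since $q-1$ is even for odd $q$. An element $q^t - 1$ of $R_1$ is odd exactly when $q = 2$. Hence the odd elements of $R_0$ are precisely the numbers $2^t - 1$.
\item By pairwise coprimality, at most one $r_i$ is even.
\end{itemize}

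This gives two cases.
\begin{itemize}
\item If $n$ is odd, every factor has the form $2^t-1$, so $n \in O$. The set $O$ has density zero by Lemma \ref{lem:odd_sparse}.
\item If $4 \mid n$, there is exactly one even factor $e$, and it satisfies $4 \mid e$. In particular $e \notin R_3$, so $e \in E := R_1 \cup R_2 \cup R_4$. The remaining factors multiply to an element of $O$. Thus $n \in OE$.
\end{itemize}

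The set $E$ has density zero. Indeed, $R_1$ has density zero by Lemma \ref{ppd}, $R_2 \subseteq \im\varphi$ has density zero by Theorem \ref{thm:erdos_totient}, and $R_4 \subseteq L$ has density zero by Theorem \ref{thm:landau}. A finite union of density-zero sets has density zero, by Lemma \ref{uniondensity} or directly.

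The key step is then Lemma \ref{lem:product_density}, applied with $A = O$ and $B = E$. This is legitimate because Lemma \ref{lem:odd_sparse} gives $\sum_{o\in O} 1/o < \infty$, and it yields $\nd(OE) = 0$.

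Hence $Z \subseteq O \cup OE$ has density zero, and $R = R_3 \cup Z$. Lemma \ref{uniondensity} then gives $\nd(R) = \nd(R_3) = 1/4$.

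The only delicate point is the product step. A naive product of two density-zero sets need not have density zero, which is exactly why the convergent reciprocal sum for $O$ is essential. Beyond that, the argument is careful parity bookkeeping; in particular, coprimality is what rules out two even factors.
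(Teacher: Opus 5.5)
Your proof is correct and is essentially the paper's argument. The paper writes $R$ as the union $O \cup R_3O \cup (R_1^e \cup R_2 \cup R_4)O$ and notes that $R_3O = R_3$, whereas you split off $R \setminus R_3$ using the $2$-adic valuation of the single even factor; the ingredients are the same in both (parity plus coprimality, the convergent reciprocal sum for $O$, the product-density lemma, and the union lemma).
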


\begin{proof}

As above, let $R$ denote the set of orders of realizable finite cyclic groups. Then, $n \in R$ if and only if $n$ is a product of pairwise coprime integers belonging to the set $R_0 = R_1^o \cup R_1^e \cup R_2 \cup R_3 \cup R_4$, where:
\begin{align*}
R_1^o &= \{ 2^t - 1 \mid t \ge 1 \} \\
R_1^e &= \{ q^t - 1 \mid q \text{ is an odd prime, } t \ge 1 \} \\
R_2 &= \{ q^s(q-1) \mid q \text{ is an odd prime, } s \ge 1 \} \\
R_3 &= \{ 4m + 2 \mid m \ge 0 \} \\
R_4 &= \{ 4k \mid k \text{ is an odd positive integer, and } p \mid k \implies p \equiv 1 \pmod 4 \}.
\end{align*}

Write $n = n_1 \cdots n_k$ where each $n_i \in R_0$ and the $n_i$ are pairwise relatively prime. The only odd elements in $R_0$ are the elements in $R_1^o$. At most one of the $n_i$ can be even. Thus, $n$ must factor as $n=o$ or $n = eo,$ where $e \in R_0\setminus R_1^o$ and $o \in O$ (that is, $o$ is a pairwise product of relatively prime integers of the form $2^t-1$). So we can view $R$ is the union of three sets: $O,$ $R_3 O,$ and $(R_1^e \cup R_2 \cup R_4)O.$ We already proved that the first set has density zero in Lemma \ref{lem:odd_sparse}. The last set has density zero by Lemmas \ref{lem:product_density}, \ref{uniondensity}, \ref{ppd}, and \ref{lem:odd_sparse}. Since $R_3$ is exactly the set of integers of the form $2k$ where $k$ is odd and $1 \in O$, we have $R_3O = R_3,$ so the natural density of the second set is 1/4 by Lemma \ref{b3density}. Now, by Lemma \ref{uniondensity}, $\nd(R) = 1/4$ and the proof is complete.
\end{proof}

\section{Realizable density of indecomposable finite abelian groups}

Recall that the indecomposable finite abelian groups are the cyclic groups of prime power order. It was shown in \cite{CLabelian} that the realizable indecomposable finite abelian groups are exactly the groups $\C_2, \C_4, \C_8$, $\C_p$ for $p$ a Mersenne prime, and $\C_{q-1}$ for $q$ a Fermat prime. 

\begin{thm}
The realizable density of indecomposable finite abelian groups is $0$.
\end{thm}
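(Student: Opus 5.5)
The plan is to compare the counting function of realizable indecomposable groups with that of all indecomposable groups. Since each indecomposable finite abelian group is a cyclic group of prime power order, and there is exactly one cyclic group of each order, we have $\mathcal{I}(x) = P(x)$, the number of prime powers $p^t \le x$ with $t \ge 1$. We will bound $\mathcal{I}_r(x)$ from above, bound $\mathcal{I}(x)$ from below, and show that the ratio tends to $0$.

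For the numerator, we use the classification from \cite{CLabelian}. Apart from the three groups $\C_2, \C_4, \C_8$, the realizable indecomposable groups are $\C_p$ for $p$ a Mersenne prime and $\C_{q-1}$ for $q$ a Fermat prime. A Mersenne prime has the form $2^t - 1$, so there are at most $\log_2(x+1)$ of them up to $x$. Each Fermat prime $q$ gives $q - 1 = 2^{2^m}$, a power of $2$, so again there are at most $\log_2 x$ such groups of order at most $x$. Hence $\mathcal{I}_r(x) \le 3 + 2\log_2(x+1) = O(\log x)$. This step needs no unproven facts about whether Mersenne or Fermat primes are finite in number, since we only use that they lie in sparse families of integers.

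For the denominator, we use the trivial bound $\mathcal{I}(x) \ge \pi(x)$. By the prime number theorem, or just Chebyshev's lower bound, $\pi(x) \ge c\,x/\log x$ for $x \gg 0$. Therefore
\[ \frac{\mathcal{I}_r(x)}{\mathcal{I}(x)} \le \frac{O(\log x)}{c\,x/\log x} = O\!\left(\frac{(\log x)^2}{x}\right) \to 0, \]
and so $\delta(\mathcal{I}) = 0$.

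There is no real obstacle here. The only point that needs care is noticing that the denominator counts prime powers, not all integers, so the comparison has to be made against $\pi(x)$ rather than against $x$. Using $\pi(x) \gg x/\log x$ settles this directly.
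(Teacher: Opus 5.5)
Your proposal is correct and follows essentially the same argument as the paper. You bound $\mathcal{I}_r(x)$ by $O(\log x)$ because Mersenne primes and Fermat primes minus one lie in families tied to powers of $2$, bound $\mathcal{I}(x)$ below by $\pi(x)$, and conclude with the prime number theorem (Chebyshev's bound suffices); the only cosmetic difference is that you carry the exceptional groups $\C_2, \C_4, \C_8$ as an additive constant rather than discarding them at the outset.
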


\begin{proof}
In computing the realizable density of any infinite class, we may ignore any finite subset of the realizable elements in that class without affecting the density. So, we will let $\mathcal{I}_r$ denote the collection of groups $\C_k$ when $k$ is a Mersenne prime or $k+1$ is a Fermat prime. The number Mersenne primes $k$ at most $x$ and the number of Fermat primes $k+1$ at most $x$ are each bounded above by the number of powers of 2 less than $x+1$. Thus, \[ \mathcal{I}_r(x) \leq 2\log_2(x + 1). \]

A finite abelian group is indecomposable if and only if it is cyclic of prime power order. So, we certainly have $\mathcal{I}(x) \geq \pi(x)$. Thus, \[ \frac{\mathcal{I}_r(x)}{\mathcal{I}(x)} \leq \frac{2 \log_2(x+1)}{\pi(x)} \sim \frac{2(\log_2(x+1))(\log x)}{x}\]
by the prime number theorem. The limit of the last expression above is zero, so the realizable density of $\mathcal{I}$ is zero.
\end{proof}

\section{Realizable density of abelian $p$-Groups}
In this section, we determine the realizable density of the class $\mathcal{A}_p$ of finite abelian $p$-groups.  Let $\mathcal{A}_{p,n}$ denote the set of isomorphism classes of abelian $p$-groups of order $p^n$. It is a well-known consequence of the fundamental theorem of finitely generated abelian groups that $|\mathcal{A}_{p,n}| = p(n)$, where $p(n)$ is the integer partition function. Let $\mathcal{R}_{p,n}$ denote the subset of $\mathcal{A}_{p,n}$ consisting of realizable groups. 

By definition,
\[
\delta(\mathcal{A}_p) = \lim_{N \to \infty} \frac{\sum_{n=1}^N |\mathcal{R}_{p,n}|}{\sum_{n=1}^N |\mathcal{A}_{p,n}|}.
\]
Because the sequence of denominators $\sum_{n=1}^N |\mathcal{A}_{p,n}| = \sum_{n=1}^N p(n)$ is strictly increasing and diverges to infinity, we may attempt to apply the Stolz-Ces\`{a}ro theorem by computing the limit
\[
\lim_{n \to \infty} \frac{|\mathcal{R}_{p,n}|}{|\mathcal{A}_{p,n}|} = \lim_{n \to \infty} \frac{|\mathcal{R}_{p,n}|}{p(n)}
\]

\begin{thm} \label{p-groups}
Let $p$ be a prime. The realizable density of finite abelian $p$-groups is $1$ if $p = 2$ and $0$ if $p$ is odd.
\end{thm}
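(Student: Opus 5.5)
We apply the Stolz--Ces\`aro theorem, as set up just before the statement: it suffices to compute $\lim_{n\to\infty} |\mathcal{R}_{p,n}|/p(n)$ in each case. The denominators $\sum_{n\le N} p(n)$ are positive and diverge, so once this limit exists it equals $\delta(\mathcal{A}_p)$. (Orders of $p$-groups are the powers $p^n$, so the counting functions up to $x$ are exactly these partial sums at $N=\lfloor \log_p x\rfloor$.)

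\textbf{Odd $p$.} By Ditor's classification \cite{Ditor1971}, a realizable finite abelian $p$-group with $p$ odd must be elementary abelian. Hence $|\mathcal{R}_{p,n}|\le 1$, since the only candidate of order $p^n$ is $\C_p^n$. Since $p(n)\to\infty$ (for instance by Theorem \ref{thm:hardy_ramanujan}, or simply because $p(n)\ge n/2$ is trivial), we get $|\mathcal{R}_{p,n}|/p(n)\le 1/p(n)\to 0$, and so $\delta(\mathcal{A}_p)=0$.

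\textbf{$p=2$.} We use the fact, quoted in the introduction, that $\C_2\times H$ is realizable for every finite abelian group $H$. For instance, $H$ is a unit group in characteristic zero via $\Z[H]$-type constructions, or one can cite the relevant result from \cite{CLabelian}. So every group of order $2^n$ having at least one cyclic factor $\C_2$ is realizable. These correspond to partitions of $n$ containing a part equal to $1$, and removing one such part gives a bijection with the partitions of $n-1$. Hence $|\mathcal{R}_{2,n}|\ge p(n-1)$, and
\[ 1\ge \frac{|\mathcal{R}_{2,n}|}{p(n)} \ge \frac{p(n-1)}{p(n)}. \]
By Theorem \ref{thm:hardy_ramanujan}, $p(n-1)/p(n)\sim \frac{n}{n-1}\exp\bigl(\pi\sqrt{2/3}\,(\sqrt{n-1}-\sqrt n)\bigr)\to 1$, because $\sqrt n-\sqrt{n-1}\to 0$. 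The squeeze theorem gives the limit $1$, and Stolz--Ces\`aro yields $\delta(\mathcal{A}_2)=1$.

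The main obstacle is justifying that $\C_2\times H$ is always realizable; the paper asserts this without proof. If needed, I would take $R=\Z[x]/(\text{suitable})\times\cdots$ or cite \cite{CLabelian}. Once that is granted, the asymptotic step $p(n-1)/p(n)\to1$ is routine from Hardy--Ramanujan.
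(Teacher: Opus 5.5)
Your proposal follows the paper's proof essentially step for step. For odd $p$, Ditor's theorem gives $|\mathcal{R}_{p,n}|\le 1$. For $p=2$, the partitions of $n$ containing a part equal to $1$ give $|\mathcal{R}_{2,n}|\ge p(n-1)$, and then Hardy--Ramanujan, the squeeze, and Stolz--Ces\`aro finish both cases exactly as in the paper.

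The one point to repair is your justification that $\C_2\times H$ is realizable. The paper does not leave this unproved. It cites \cite{CLpgroups}: for a commutative ring $R$ and an $R$-module $M$, the group $R^\times\times(M,+)$ is a unit group in characteristic $\mathrm{char}\,R$, and the paper applies this with $R=\Z$ and $M=H$.

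Your first suggestion, a group ring $\Z[H]$, does not work in general. By Higman's theorem, the units of $\Z[H]$ for finite abelian $H$ are just $\pm H$ exactly when the exponent of $H$ divides $4$ or $6$; for instance, $\Z[\C_8]$ has infinitely many units. Citing \cite{CLabelian} is also not the reference the paper uses.

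The right concrete ring is the trivial extension $\Z\oplus H$ with multiplication $(a,h)(b,k)=(ab,\,ak+bh)$:
\begin{itemize}
\item An element $(a,h)$ is a unit if and only if $a=\pm1$, with inverse $(a,-h)$.
\item Every unit factors as $(a,h)=(a,0)(1,ah)$, and $(1,h)(1,k)=(1,h+k)$.
\item Hence the unit group is $\{(\pm1,0)\}\times\{(1,h)\}\cong\C_2\times H$, and the ring has characteristic zero.
\end{itemize}
Your $\Z[x]/(\dots)$ idea amounts to the same ring: for $H=\bigoplus_i \Z/n_i$, take $\Z[x_1,\dots,x_k]/(n_ix_i,\ x_ix_j \colon 1\le i\le j\le k)$. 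With this in place, the rest of your argument is complete.
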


\begin{proof}
We divide the proof into two cases based on the parity of the prime $p$.

If $p$ is odd, then from \cite{Ditor1971} we know that if a finite abelian $p$-group is realizable, then it must be elementary abelian; since there is a unique such group of order $p^n$, we know that $|\mathcal{R}_{p, n}|$ is either 1 or 0 for all $n$ (depending on whether $\C_p$ is realizable, which is true if and only if $p$ is a Mersenne prime). In either case, \[
\lim_{n \to \infty} \frac{|\mathcal{R}_{p,n}|}{|\mathcal{A}_{p,n}|} \leq \lim_{n \to \infty} \frac{1}{p(n)} = 0,
\]
and so by the Stolz-Ces\`{a}ro theorem we have that the realizable density is zero.

Now consider the prime $p = 2.$ In \cite{CLpgroups} it is shown that for any commutative ring with unity $R$ and any $R$-module $M$, the group $R^\times \times (M, +)$ is realizable as the group of units in a ring with characteristic $\mathrm{char}\, R$. Hence, since any abelian group is a $\Z$-module and $\Z^\times = \C_2,$ we have that $\C_2 \times A$ is realizable as the group of units in a ring of characteristic zero for any finite abelian $2$-group $A$. Therefore to establish a lower bound on $|\mathcal{R}_{2,n}|$, it suffices to count the number of abelian $2$-groups of order $2^n$ that contain at least one $\C_2$ direct factor.

Under the bijection between abelian $2$-groups of order $2^n$ and the integer partitions of $n$, a group with a $\C_2$ factor corresponds to a partition of $n$ that contains at least one part of size $1$. We can count the exact number of such partitions by establishing a bijection: removing a single part of size $1$ from a partition of $n$ yields an unrestricted partition of $n-1$. Thus, there are exactly $p(n-1)$ partitions of $n$ containing at least one $1$.

Because every group corresponding to these $p(n-1)$ partitions is realizable, we have $|\mathcal{R}_{2,n}| \geq p(n-1)$. Thus,
\[
\frac{|\mathcal{R}_{2,n}|}{|\mathcal{A}_{2,n}|} \geq \frac{p(n-1)}{p(n)}.
\]
To evaluate $\lim_{n \to \infty} \frac{p(n-1)}{p(n)}$ we apply the Hardy-Ramanujan asymptotic formula for the partition function, $p(n) \sim \frac{1}{4n\sqrt{3}} \exp(\pi \sqrt{2n/3})$:
\[
\frac{p(n-1)}{p(n)} \sim \frac{n}{n-1} \exp\left(\pi \sqrt{\frac{2}{3}} \left(\sqrt{n-1} - \sqrt{n}\right)\right)
\]

By rationalizing the exponent, we observe that $\sqrt{n-1} - \sqrt{n} = \frac{-1}{\sqrt{n-1} + \sqrt{n}}$. As $n \to \infty$, this term vanishes. Consequently, the exponential term approaches $e^0 = 1$, and the rational factor $\frac{n}{n-1}$ also approaches $1$. Thus:
\[
\lim_{n \to \infty} \frac{p(n-1)}{p(n)} = 1.
\]
Since $\lim_{n \to \infty} \frac{|\mathcal{R}_{2,n}|}{|\mathcal{A}_{2,n}|}$ cannot exceed $1$, we conclude by the Stolz-Ces\`{a}ro theorem that the cumulative asymptotic density of realizable abelian $2$-groups is exactly $1$.
\end{proof}

This theorem highlights a stark dichotomy between the prime $2$ and all odd primes: asymptotically, almost every finite abelian $2$-group is realizable, whereas almost no odd $p$-group is realizable.

\section{Realizable density of finite abelian groups}

First we will prove that the density in $\mathcal{A}$ of finite abelian groups realizable via a ring of positive characteristic is zero; then we will prove that the density in $\mathcal{A}$ of finite abelian groups realizable via a ring of characteristic zero is 1/2. This will establish $\delta(\mathcal{A}) = 1/2.$

\subsection{Density in positive characteristic}

If an abelian group $G$ is the group of units of a ring of positive characteristic, then $G$ is the group of units in a finite commutative ring $R$. By the structure theorem for commutative Artinian rings, $R \cong \prod_{i=1}^k R_i$, where each $R_i$ is a finite local ring with residue field of prime power order $p_i^{f_i}$. Their unit groups are $R_i^\times \cong \C_{p_i^{f_i}-1} \times P_i$, where $P_i$ is a $p_i$-group. This means $|G| = \prod_{i=1}^k (p_i^{f_i} - 1)p_i^{a_i}$ where $k \geq 1, f_i \geq 1$ and $a_i \geq 0$. The primes $p_i$ need not be distinct, but we can regroup the factors in this product by combining factors where the primes $p_i$ are the same; thus $|G|$ is a product over a set of \textit{distinct} primes $p$ of terms of the form \[ p^a\prod_{j=1}^r (p^{f_j} - 1)\] where $r\geq 1, f_j \geq 1,$ and $a \geq 0$. Note that when $r = 1$ and $f_1 =1$, this term is $\varphi(p^{a+1})$. Thus, it is helpful to consider these factors separately since the product of these factors lies in the image of Euler's $\varphi$-function (because the primes $p$ are distinct), which has natural density zero. If we exclude terms where $r = 1$ and $f_1 = 1$, any remaining term that has $f_j = 1$ for some $j$ must also have $r \geq 2.$ So, each remaining factor is a product of numbers taken from the set $G = G_1 \cup G_2 \cup G_3$ where:
\begin{itemize}[leftmargin=2cm]
    \item[$G_1=$] $\{(p^f - 1)p^a \, | \, \text{ $f \geq 2, a \geq 0$}\}$
    \item[$G_2=$] $\{(p-1)^tp^a \, | \, \text{ $t \geq 2, a \geq 0$}\}$
    \item[$G_3=$] $\{(p-1)(p^f - 1)p^a \, | \, \text{ $f \geq 1, a \geq 0$}\}.$
\end{itemize}

\begin{lem}\label{lem:reciprocal_sums}
Define $G$ as above and let $B$ be the set of all finite products of the elements in $G$. The sum of the reciprocals of the elements in $B$ converges.
\end{lem}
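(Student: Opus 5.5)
The plan is to reduce the statement to the convergence of $\sum_{g} 1/g$ over the elements of $G$ itself, and then use an Euler-product argument. First we discard the value $1$ from $G$. It occurs, for instance, as $(2-1)^t 2^0 \in G_2$ and as $(2-1)(2^1-1)2^0 \in G_3$. Removing it does not change $B$ (apart from possibly the element $1$ itself, which contributes only $1$ to the sum). Let $G'$ denote the set of distinct values in $G$ that exceed $1$.

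Every $b \in B$ with $b>1$ is a product $g_1 g_2 \cdots g_k$ of elements of $G'$, with repetition allowed. Choosing one such representation for each $b$ gives an injection from $B\setminus\{1\}$ into the finite multisets of $G'$. Hence
\[
\sum_{b\in B}\frac1b \;\le\; 1+\sum_{\text{multisets}}\frac{1}{g_1\cdots g_k} \;=\; \prod_{g\in G'}\Bigl(1-\frac1g\Bigr)^{-1}.
\]
Since every $g\in G'$ satisfies $g\ge 2$, we have $-\log(1-1/g)\le 2/g$. So the product is finite as soon as $\sum_{g\in G'}1/g<\infty$, and everything reduces to bounding the three pieces $G_1,G_2,G_3$ separately. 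In each piece we sum crudely over all parameters $(p,f,a)$ or $(p,t,a)$, which overcounts values and is harmless. In every case the factor $p^a$ contributes $\sum_{a\ge0}p^{-a}\le 2$.

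For $G_1$, use $p^f-1\ge p^f/2$. This gives
\[
\sum_{g\in G_1}\frac1g \le \sum_p 2\sum_{f\ge2}\frac{2}{p^f} \le 8\sum_p \frac1{p^2} < \infty.
\]

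For $G_2$ and $G_3$ the prime $p=2$ must be handled separately, because $p-1=1$ there. When $p=2$, the elements of $G_2$ are just $2^a$, and the elements of $G_3$ are $(2^f-1)2^a$. After excluding the value $1$, both reciprocal sums are bounded by geometric series. Note that the $G_3$ sum is bounded by $\bigl(\sum_{f}1/(2^f-1)\bigr)\cdot 2$, which converges.

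When $p\ge3$, the $G_2$ contribution is at most
\[
2\sum_{t\ge2}(p-1)^{-t} \le \frac{4}{(p-1)^2}.
\]
For $G_3$, use $(p-1)(p^f-1)\ge (p-1)^2p^{f-1}$, which gives a contribution of at most $2\cdot\frac{1}{(p-1)^2}\cdot\frac{p}{p-1}\le \frac{3}{(p-1)^2}$. Summing over primes $p\ge 3$, $\sum_p (p-1)^{-2}<\infty$.

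The main subtlety, rather than a real obstacle, is these degenerate $p=2$ cases. There the factor $p-1$ collapses to $1$, which produces the element $1$ and could in principle make the Euler product diverge. Excluding the value $1$ and using the geometric bounds above handles this. Once $\sum_{g\in G'}1/g<\infty$ is established, the displayed product inequality finishes the proof.
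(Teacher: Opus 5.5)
Your proof is correct and follows essentially the same route as the paper: bound the reciprocal sums over $G_1$, $G_2$, $G_3$ prime by prime (treating the degenerate $p=2$ values of $G_2$ by their actual values $2^a$), then dominate $\sum_{b\in B}1/b$ by the Euler product $\prod_{g\in G\setminus\{1\}}(1-1/g)^{-1}=\prod_{g\in G\setminus\{1\}}\bigl(1+\frac{1}{g-1}\bigr)$. The differences are cosmetic. You justify convergence of the product via $-\log(1-1/g)\le 2/g$ rather than by limit comparison with $\sum 1/(g-1)$. You also split off $p=2$ in $G_3$, which the paper's uniform bound $\sum_p 2p/(p-1)^3$ does not need.
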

\begin{proof}
First, we prove that the sum of the reciprocals of elements in $G$ converges. To do so, it suffices to prove that the sum of the reciprocals of the elements in each $G_i$ converges for $i = 1, 2, 3.$

For $G_1$:
\[ \sum_{g \in G_1} \frac{1}{g} = \sum_{p} \sum_{f \ge 2} \sum_{a=0}^{\infty} \frac{1}{(p^f - 1)p^a} \le \sum_{p} \sum_{f \ge 2} \frac{1}{p^f/2} \left( \frac{p}{p-1} \right) \le \sum_{p} \frac{4}{p^2} < \infty. \]

For $G_2$, we can ignore the elements with $p = 2$ ($\{ 2^a \, | \, a \geq 0\}$) because the sum of the reciprocals of such elements converges. For the remaining elements, we have:
\[ 
\begin{aligned}
\sum_{p>2} \sum_{t \ge 2} \sum_{a=0}^{\infty} \frac{1}{(p-1)^tp^a} &= \sum_{p>2} \sum_{t \ge 2} \frac{p}{(p-1)^{t+1}} \leq \sum_{p>2} \sum_{t \ge 2} \frac{p2^{t+1}}{p^{t+1}}\\
& = \sum_{p>2} \frac{8}{p^2} \sum_{t \ge 0} \left(\frac{2}{p}\right)^t= \sum_{p>2}\frac{8}{p(p-2)}< \infty.
\end{aligned}\] 

For $G_3$:
\[ 
\begin{aligned}
\sum_{g \in G_3} \frac{1}{g} &= \sum_{p} \sum_{f \ge 1} \sum_{a=0}^{\infty} \frac{1}{(p-1)(p^f - 1)p^a} = \sum_{p} \sum_{f \ge 1} \frac{p}{(p-1)^2(p^f - 1)}\\
& \leq \sum_{p} \sum_{f \ge 1} \frac{2p}{(p-1)^2p^f} = \sum_{p} \sum_{f \ge 0} \frac{2}{(p-1)^2p^f} = \sum_{p}\frac{2p}{(p-1)^3} \leq \sum_{p} \frac{16}{p^2} < \infty.
\end{aligned}\]
Taken together, these facts imply that $\sum_{g\in G} \frac{1}{g}$ converges.

Because $\sum_{g \in G} \frac{1}{g}$ converges and $g \ge 2$ for all $g \in G\setminus\{1\}$, the series $\sum_{g \in G\setminus\{1\}} \frac{1}{g - 1}$ also converges by the limit comparison test. By the standard properties of infinite products, the convergence of $\sum \frac{1}{g - 1}$ guarantees the convergence of the infinite Euler product $\prod_{g \in G\setminus\{1\}} \left(1 + \frac{1}{g-1}\right)$. Expanding each term in this product as a geometric series yields $\prod_{g \in G\setminus\{1\}} \left( 1 + \frac{1}{g} + \frac{1}{g^2} + \dots \right)$. This expansion generates the reciprocal of every element in $B$. Therefore:
\[ \sum_{b \in B} \frac{1}{b} \le \prod_{g \in G\setminus\{1\}} \left( 1 + \frac{1}{g - 1} \right) < \infty \]
Thus, the sum of the reciprocals of $B$ converges.
\end{proof}

\begin{thm} \label{lem:finite_char}
Let $\mathcal{P}$ denote the set of isomorphism classes of finite abelian groups that occur as the group of units in a ring of positive characteristic. The density of $\mathcal{P}$ in $\mathcal{A}$ is zero. 
\end{thm}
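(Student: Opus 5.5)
By Lemma \ref{reductiontonaturaldensity}, it suffices to find a set $S \subseteq \N$ of natural density zero that contains the order of every group in $\mathcal{P}$. The discussion preceding Lemma \ref{lem:reciprocal_sums} tells us what to take: $S = (\im \varphi) B$, where $B$ is the set of finite products of elements of $G = G_1 \cup G_2 \cup G_3$. Here $1 \in B$ as the empty product, and $1 \in \im\varphi$.

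\textbf{Step 1: the orders of groups in $\mathcal{P}$ lie in $S$.} Take $G \in \mathcal{P}$. As explained above, $|G|$ is a product over distinct primes $p$ of factors of the form $p^a\prod_{j=1}^r (p^{f_j}-1)$. I sort these factors into two kinds.
\begin{itemize}
\item Factors with $r=1$ and $f_1=1$ equal $\varphi(p^{a+1})$. Since these primes are distinct and $\varphi$ is multiplicative on coprime arguments, the product of all such factors is $\varphi$ of the corresponding product of prime powers. So it lies in $\im\varphi$.
\item Every other factor must be shown to lie in $B$. If $r=1$ and $f_1\ge 2$, the factor lies in $G_1$. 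If $r\ge 2$, I would peel off the $f_j=1$ terms and the $f_j\ge2$ terms. For example, $p^a (p-1)^t \prod_{f_j\ge2}(p^{f_j}-1)$ can be written as an element of $G_2$ or $G_3$ (carrying the power $p^a$) times elements of $G_1$ with $a=0$.
\end{itemize}
The case $r\ge2$ splits further:
\begin{itemize}
\item If at least one $f_j=1$: put $p^a$ together with the $(p-1)$'s (into $G_2$ when $t\ge2$), or into a $G_3$ element $(p-1)(p^{f}-1)p^a$ with some $f_j\ge2$ (when $t=1$).
\item If no $f_j=1$: every factor is in $G_1$.
\end{itemize}
I need to check that every combination is covered, including the case $t=1$, $r\ge2$, which uses $G_3$. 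Hence $|G| \in (\im\varphi)B$.

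\textbf{Step 2: $S$ has natural density zero.} By Lemma \ref{lem:reciprocal_sums}, $\sum_{b\in B} 1/b$ converges. By Theorem \ref{thm:erdos_totient}, $\im\varphi$ has natural density zero. Lemma \ref{lem:product_density}, applied with $A = B$ and the zero-density set $\im \varphi$, then gives $\nd(S)=0$. Lemma \ref{reductiontonaturaldensity} finishes the proof.

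\textbf{Main obstacle.} The analytic inputs are already in place, so the only real care is the combinatorial bookkeeping in Step 1. I must make sure that each regrouped prime factor with $(r,f_1)\neq(1,1)$ genuinely factors into elements of $G_1\cup G_2\cup G_3$. In particular, the power $p^a$ must be absorbed into exactly one chosen element, and the $t=1$ case must be routed through $G_3$.
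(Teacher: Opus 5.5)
Your proposal is correct and is the paper's argument: you reduce to orders via Lemma~\ref{reductiontonaturaldensity}, place the orders in $(\im\varphi)B$, and apply Lemma~\ref{lem:product_density} using the convergent reciprocal sum from Lemma~\ref{lem:reciprocal_sums} together with the zero density of $\im\varphi$. Your case split in Step 1 ($t\ge 2$ goes to $G_2$, $t=1$ with $r\ge 2$ goes to $G_3$, $t=0$ is all $G_1$, with $p^a$ carried by exactly one factor) is exhaustive and simply makes explicit the factorization the paper asserts before Lemma~\ref{lem:reciprocal_sums}.
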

\begin{proof} By Lemma \ref{reductiontonaturaldensity}, it suffices to prove that the set $P$ of orders of groups in $\mathcal{P}$ has natural density zero. We explained before the proof of Lemma \ref{lem:reciprocal_sums} that each $n \in P$ can be factored as $n = ab$ where $a$ is in the image of the Euler $\varphi$-function and $b$ lies in the set $B$ defined in Lemma \ref{lem:reciprocal_sums}. Thus $P$ is contained in the product set $(\im \varphi)B$. Since $\im \varphi$ has natural density zero and the sum of the reciprocals of the elements in $B$ converges, by Lemma \ref{lem:product_density} their product set has density $0$. Since $P$ is contained in this product set, the density of $P$ is $0$.
\end{proof}

\subsection{Density in arbitrary characteristic}

We first prove a lemma that will give us a lower bound on the realizable density of $\mathcal{A}.$

\begin{lem} \label{lem:C_2factor}
    For any finite abelian group $H$, $\C_2 \times H$ is realizable as the group of units in a ring of characteristic zero; the density of such groups in $\mathcal{A}$ is 1/2.
\end{lem}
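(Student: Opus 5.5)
The realizability claim comes straight from the result of \cite{CLpgroups} already used in the proof of Theorem \ref{p-groups}. For a commutative ring $R$ and an $R$-module $M$, the group $R^\times \times (M,+)$ is the unit group of a ring of characteristic $\mathrm{char}\,R$. We take $R=\Z$, so $R^\times = \C_2$, and we take $M=H$ viewed as a $\Z$-module. This shows $\C_2\times H$ is realizable in characteristic zero. So the real content of the lemma is the density statement.

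For the density, the plan is to count groups with a $\C_2$ direct factor via a bijection. The map $H \mapsto \C_2 \times H$ is injective on isomorphism classes, which follows from Krull--Schmidt / the fundamental theorem. Its image is exactly the set of isomorphism classes of finite abelian groups having a $\C_2$ direct factor. Moreover it doubles orders. Letting $\mathcal{T}$ denote this class, we get the exact identity
\[ \mathcal{T}(x) = \mathcal{A}(x/2). \]
This is the order-$n$ analogue of the partition bijection ``remove a part of size 1'' used in Theorem \ref{p-groups}.

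By Theorem \ref{thm:erdos_szekeres}, $\mathcal{A}(x)\sim Cx$, so
\[ \frac{\mathcal{T}(x)}{\mathcal{A}(x)} = \frac{\mathcal{A}(x/2)}{\mathcal{A}(x)} \sim \frac{C x/2}{C x} = \frac12, \]
giving density exactly $1/2$.

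There is essentially no obstacle here. The only point needing care is the bijection: we must confirm that every group with a $\C_2$ direct factor arises as $\C_2\times H$ for a unique $H$ up to isomorphism. This is cancellation for finite abelian groups, which follows from uniqueness of the invariant-factor or primary decomposition, since removing one copy of $\C_2$ from the $2$-primary partition is well defined.
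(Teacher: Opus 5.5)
Your proposal is correct and follows the paper's proof essentially step for step. You obtain realizability from the $R^\times \times (M,+)$ construction with $R=\Z$ and $M=H$. You then use the order-doubling bijection $H\mapsto \C_2\times H$ (justified by the structure theorem) to get $\mathcal{Q}(x)=\mathcal{A}(x/2)$, and Theorem \ref{thm:erdos_szekeres} gives the ratio $1/2$.
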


\begin{proof}
Let $\mathcal{Q}$ denote the collection of isomorphism classes of finite abelian groups of the form $\C_2 \times H$. We explained in the proof of Theorem \ref{p-groups} why these groups are realizable via rings of characteristic zero, so we just need to compute the density of this family within $\mathcal{A}$. That is, we need to compute \[ \lim_{x \to \infty} \frac{\mathcal{Q}(x)}{\mathcal{A}(x)}.\] For any integer $n$, let $a(n)$ denote the number of isomorphism classes of abelian groups of order $n$. By the structure theorem for finite abelian groups, the map $H \mapsto \C_2 \times H$ induces a bijection from the groups in $\mathcal{A}$ of order $k$ to the groups in $\mathcal{Q}$ of order $2k$. Thus, the number of groups in $\mathcal{Q}$ of order $n$ is zero if $n$ is odd and $a(n/2)$ if $n$ is even. From this it easily follows that $\mathcal{Q}(x) = \mathcal{A}(x/2)$ for any real number $x$. By Theorem \ref{thm:erdos_szekeres}, there is a constant $C$ where $\mathcal{A}(x) \sim Cx$; thus \[\frac{\mathcal{Q}(x)}{\mathcal{A}(x)} = \frac{\mathcal{A}(x/2)}{\mathcal{A}(x)} \sim \frac{Cx/2}{Cx} = 1/2.\] Hence, \[ \lim_{x \to \infty} \frac{\mathcal{Q}(x)}{\mathcal{A}(x)} = \frac{1}{2},\] and the proof is complete.
\end{proof}

\begin{thm}
The realizable density of $\mathcal{A}$ is $1/2$.
\end{thm}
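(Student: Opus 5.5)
The plan is to squeeze the realizable density between $1/2$ and $1/2$. For the lower bound, Lemma \ref{lem:C_2factor} shows that the class $\mathcal{Q}$ of groups $\C_2\times H$ is realizable and has density $1/2$ in $\mathcal{A}$. Hence $\delta^-(\mathcal{A})\ge 1/2$.

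For the upper bound, split the realizable finite abelian groups into three pieces:
\begin{itemize}
\item[(i)] those realizable via a ring of positive characteristic;
\item[(ii)] those with a $\C_2$ direct factor;
\item[(iii)] the remaining groups, which are realizable only in characteristic zero and have no $\C_2$ factor.
\end{itemize}
Piece (i) has density zero by Theorem \ref{lem:finite_char}. Piece (ii) is $\mathcal{Q}$. So it suffices to show that piece (iii) has density zero in $\mathcal{A}$. Then $\mathcal{A}_r(x)\le \mathcal{P}(x)+\mathcal{Q}(x)+o(x)$ gives $\delta^+(\mathcal{A})\le 1/2$.

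For (iii), we use the structure of characteristic-zero realizations, as in \cite{CLabelian}. If $G=R^\times$ is finite abelian with $\mathrm{char}\,R=0$, we may replace $R$ by the subring generated by $G$, a commutative quotient of $\Z[G]$. Then $-1\neq 1$ is a unit of order $2$, and the standard argument shows that the $2$-part of $G$ is $\C_2$ or $\C_4$ times something. More precisely, if $\C_2$ is not a direct factor, then $-1$ is a square and $R$ contains $\Z[i]$ in a suitable sense. We expect to reduce to the following order constraint: $|G|$ lies in $(\im\varphi)L$, with $L$ the sums of two squares. The mechanism is that $R$ is essentially a finitely generated $\Z[i]$-algebra whose finite unit group decomposes, after passing to $R/\text{nil}$ and a product of reduced pieces, into unit groups of rings of integers or orders in number fields containing $i$. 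Those contribute roots of unity, of orders in $\im\varphi$-type sets. There are also finite-ring pieces of characteristic prime to the relevant primes, and these contribute factors of the form $p^f-1$ with $p^f\equiv 1\bmod 4$, or norms from $\Z[i]$-quotients. The additive pieces, such as nilradical contributions $1+N$, are $\Z[i]$-modules, so their orders are norms, i.e.\ sums of two squares. Thus the order set is contained in $(\im\varphi)L$, possibly after multiplying by a set $B$ with convergent reciprocal sum as in Lemma \ref{lem:reciprocal_sums}.

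The argument then finishes as follows. Lemma \ref{imphiL} gives $\nd((\im\varphi)L)=0$. If an extra factor set $B$ appears, Lemma \ref{lem:product_density} absorbs it. Finally, Lemma \ref{reductiontonaturaldensity} transfers natural density zero of the orders to density zero of piece (iii) in $\mathcal{A}$. Combining this with Lemma \ref{uniondensity}-style squeezing gives $\delta(\mathcal{A})=1/2$.

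The main obstacle is the structural step: proving that every finite abelian group realized in characteristic zero without a $\C_2$ factor has order in $(\im\varphi)L$ times a sparse set. We would first try to cite the relevant result of \cite{CLabelian} describing such groups, that the group contains $\C_4$ and its realizing ring is a $\Z[i]$-algebra. We would then bound the order by decomposing $R$ through its torsion ideal and its reduced torsion-free quotient, whose finite unit group is a product of root-of-unity groups.
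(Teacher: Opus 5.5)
Your outer reduction matches the paper's. $\mathcal{Q}$ gives $\delta^-(\mathcal{A})\ge 1/2$, and positive characteristic is negligible by Theorem~\ref{lem:finite_char}. Everything then rests on showing that the orders of the remaining characteristic-zero groups lie in a set like $BS$ with $S=(\im\varphi)L$; Lemmas~\ref{imphiL}, \ref{lem:product_density} and \ref{reductiontonaturaldensity} then finish.

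The gap is that this structural step is only announced (``we expect to reduce to''), and the one concrete mechanism you offer for it is not correct. You attribute the $\im\varphi$ factor to the reduced torsion-free part, saying it contributes roots of unity ``of orders in $\im\varphi$-type sets''. This is unjustified: the roots of unity in a field containing $i$ can have non-totient order, e.g.\ $68$ for $\mathbf{Q}(\zeta_{68})$. More to the point, nothing in your sketch bounds this factor at all.

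The missing idea is to use finiteness of the unit group. First split off the finite part, $X=T\times R$, with $T$ finite and the torsion ideal of $R$ contained in its nilradical $N$; this is \cite[Proposition 1]{PearsonSchneider1970}. The splitting is genuinely needed: $\Z[i]\times\mathbb{F}_5$ is reduced, of characteristic zero, and has unit group $\C_4\times\C_4$ with no $\C_2$ factor, yet it is not torsion-free. After the splitting, $R/N$ is an order in a product of number fields with finite unit group. By Dirichlet's unit theorem each component is $\mathbf{Q}$ or imaginary quadratic, which gives $|(R/N)^\times|=2^a3^b$; this is \cite[Theorem B]{CD18a}, which the paper cites. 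With your $\Z[i]$-structure every component is in fact $\mathbf{Q}(i)$, so the order is a power of $2$. Either way this factor lies in $L\cup 3L$, not in $\im\varphi$. You then have $|G|=|T^\times|\,|(R/N)^\times|\,|N|$, with $|N|\in L$ as you say.

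The $\im\varphi$ factor comes instead from $T$. Your treatment of $T$ via the $\Z[i]$-structure is sound and a bit sharper than the paper's, which uses the absence of a $\C_2$ factor to exclude $p\equiv 3\pmod 4$. Each residue field contains $\sqrt{-1}$, so $p^f\equiv 1\pmod 4$ or $p=2$. Also $|1+\mathfrak m|=|\mathfrak m|\in L$, since $\mathfrak m$ is a finite $\Z[i]$-module.

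You omitted the $p=2$ factors $2^f-1$, such as $3$ from $\mathbb{F}_4$. These, like all factors with $f\ge 2$, belong in $B$. The $f=1$ factors $p-1$ must be grouped by prime. Repeated primes are absorbed into $L$ (even powers) or into $G_2\subseteq B$, so that what remains is $\varphi$ of a squarefree integer.

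With these ingredients you get $|G|\in BS\cup 3BS$, or simply $|G|\in BS$ with the power-of-$2$ refinement. The rest of your argument then goes through exactly as in the paper.
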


\begin{proof} 

By Lemma \ref{lem:finite_char}, the density in $\mathcal{A}$ of the finite abelian groups that are realizable as the group of units in a ring of positive characteristic is $0$. Thus, $\delta(\mathcal{A})$ (if it exists) is the density in $\mathcal{A}$ of the class $\mathcal{A}_0$ of finite abelian groups that are realizable as the group of units in a ring of characteristic zero.

In \cite{CD18a} it is proved that if a finite abelian group $G$ is the group of units in a ring of characteristic $0$, then $G$ must contain either $\C_2$ or $\C_4$ as a direct factor. We can therefore write $\mathcal{A}_0$ as the disjoint union of two sets: $\mathcal{Q}$, the finite abelian groups in $\mathcal{A}_0$ with $\C_2$ as a direct factor, and $\mathcal{F}$, the finite abelian groups in $\mathcal{A}_0$ with $\C_4$ as a direct factor but not $\C_2$. In Lemma \ref{lem:C_2factor}, we proved that the density of $\mathcal{Q}$ in $\mathcal{A}$ is 1/2, so $1/2 \leq \delta^-(\mathcal{A})$.

To complete the proof, we must prove that the density of $\mathcal{F}$ in $\mathcal{A}$ is zero. By Lemma \ref{reductiontonaturaldensity}, it suffices to show that the set $F$ of orders of elements of $\mathcal{F}$ has natural density zero. To that end, take $G \in \mathcal{F}$. Then, there is a commutative ring $X$ of characteristic zero, generated by its units, such that $G = X^\times.$ By \cite[Proposition 1]{PearsonSchneider1970}, we may write $X = T \times R$ where $T$ is a finite ring and the nilradical $N$ of $R$ contains the torsion ideal of $R$. We have a short exact sequence
\[ 1 \to 1 + N \to R^\times \to (R/N)^\times \to 1.\] where $R/N$ is a torsion-free ring. So, $|G| = |T^\times||(R/N)^\times||N|.$ Let's examine the three latter factors of $|G|$ in turn.

By \cite[Theorem B]{CD18a}, $|(R/N)^\times| = 2^a 3^b$ where $a, b \geq 0$. Hence, $|(R/N)^\times|$ is either a sum of two squares (if $b$ is even) or 3 times a sum of two squares (if $b$ is odd).

Let's next consider $|N|$. Since $X^\times$ does not have $\C_2$ as a direct factor but it does have $\C_4$ as a direct factor, the element $-1 \in X$ must have a square root in $X$; this means the characteristic zero ring $X$, and hence $R$, is a module over the Gaussian integers $\Z[i]$. Since $R^\times$ is finite, so is $N$, and thus $N$ is a finite module over the Gaussian integers. As such, $N$ is a product of modules of the form $\Z[i]/(a)$ where $a \neq 0$. It is well known that the order of this module is the Gaussian norm of $a$, which is a sum of two squares, and since the Gaussian norm is multiplicative the order of $N$ is a sum of two squares as well.

Finally, consider $|T^\times|$. Since $T$ is finite, we earlier explained that its unit group is a product of groups of the form $\C_{p^f - 1} \times P$ where $P$ is a finite $p$-group. So, $|T^\times|$ is a product of numbers of the form 
\begin{itemize}
    \item $(p^f-1)p^a$ with $f \geq 2,$ or
    \item $2^a,$ or
    \item $(p-1)p^a$ where $p$ is congruent to 1 modulo 4.
    \end{itemize}
In the last case, we are able to exclude primes congruent to 3 modulo 4 because we can't have $p - 1 = 2k$ where $k$ is odd since then the unit group of $T$ would have $\C_2$ as a direct factor coming from $\C_{p-1} = \C_2 \times \C_k$. Note further that in the last case, since $p$ is 1 mod 4, $p^a$ is a sum of two squares.

To summarize: $|G| = 3^\epsilon b x \prod_{i=1}^r(p_i-1)$ where $\epsilon = 0$ or $1$; $b \in B$ ($B$ is the set whose reciprocal sum converges from Lemma \ref{lem:reciprocal_sums}); $x \in L$ ($x$ is a sum of two squares); and the $p_i$ are distinct primes congruent to 1 mod 4. We may assume the primes $p_i$ are distinct since any even power of $(p -1)$ is a sum of two squares and hence can be absorbed in the factor $x$. Defining $S$ as in Lemma \ref{imphiL}, we see that $F \subseteq BS \cup 3BS,$ so $F$ has density zero if $BS$ has density zero. But $BS$ has density zero since $S$ has density zero and the sum of the reciprocals of elements in $B$ converges. This completes the proof.
\end{proof}

\bibliographystyle{plain} 
\bibliography{references}

\end{document}